        \newtheorem{theorem}{Theorem}[section]
\newtheorem{definition}[theorem]{Definition}
        \newtheorem{lemma}[theorem]{Lemma}
        \newtheorem{corollary}[theorem]{Corollary}
\numberwithin{equation}{section}
\newcommand \Ucal {\mathcal U}
\newcommand \Qbf {\mathbf Q}
\newcommand \ub {\overline u}
\newcommand \vb {\overline v}
\newcommand \omegab {{\widetilde \omega}}
\newcommand \ubar   	{{\overline u}}
\newcommand \barc {\underline c}
\newcommand \cbar {\overline c}
\newcommand \dive 	{\mbox{div}}
\newcommand \del		\partial
\newcommand \Ecal {\mathcal E}
\newcommand{\auth}{\textsc}
\newcommand \R     {\mathbb{R}}
\newcommand \MM     {M}
\newcommand \vbar   {{\overline v}}
\DeclareMathOperator	\sgn  {sgn}
\newcommand \supp {\text{supp }}
\newcommand \TT		{\mathcal{T}}
\newcommand \Tcal	{\TT}
\newcommand{\dK}{\partial K}
\newcommand{\sumk} {\sum_{K\in\TT^h}}
\newcommand {\sumkez} {\sum_{\substack{ K \in\Tcal^h\\ \ekz\in\dKz}}}
\newcommand{\Hkez}{H_{K,e^{0}}}
\newcommand{\qq}{{{q}_{K,\ekz}}}
\newcommand{\ukp}{u_{K}^{+}}
\newcommand{\ukm}{u_{K}^-}
\newcommand{\ukezm}{u_{K_{e^0}}^-}
\newcommand{\tu}{\widetilde{u}_{K,e^0}^+}
\newcommand \la 		\langle
\newcommand \ra 		\rangle
\newcommand \be   	{\begin{equation}}
\newcommand \ee   	{\end{equation}}
\newcommand \RR      {\mathbb{R}}
\newcommand \eps     \epsilon
\newcommand \om       {\omega}
\newcommand \Om      {\Omega}
\newcommand \al \alpha 
\newcommand \var      {\varphi}
\newcommand{\ekp}{e_{K}^{+}}
\newcommand{\ekm}{e_{K}^{-}}
\newcommand{\ekz}{e^{0}}
\newcommand{\dKz}{\del^{0}K}
\newcommand{\vep}{ \varphi_{e_K^+}}
\newcommand{\vem}{ \varphi_{e_K^-}}
\newcommand{\vepo}{ \varphi_{e_K^+}^{\omega}}
\newcommand{\vepO}{ \varphi_{e_K^+}^{\Omega}}
\newcommand{\vemO}{ \varphi_{e_K^-}^{\Omega}}
\newcommand{\veo}{ \varphi_{e}^{\omega}}
\newcommand{\veO}{ \varphi_{e}^{\Omega}}
\newcommand{\varbO} {\var_{\ekp}^\Omega}
\newcommand{\QQ}{{Q}_{K,\ekz}}
\newcommand \Omegabf {\mbox{\boldmath $\Omega$}}
\newcommand{\sumK}{\sum_{e^{0}\in\del^{0}K}}
\newcommand \iht { i^*_{H_t}}
\newcommand \ihz {i^*_{H_0}}
\def\Xint#1{\mathchoice
{\XXint\displaystyle\textstyle{#1}}%
{\XXint\textstyle\scriptstyle{#1}}%
{\XXint\scriptstyle\scriptscriptstyle{#1}}%
{\XXint\scriptscriptstyle\scriptscriptstyle{#1}}%
\!\int}
\def\XXint#1#2#3{{\setbox0=\hbox{$#1{#2#3}{\int}$}
\vcenter{\hbox{$#2#3$}}\kern-.5\wd0}}
\def\dashint{\Xint\diagup}
\begin{document}

\setcounter{footnote}{-1}

\title{Hyperbolic conservation laws on spacetimes.
\\
A finite volume scheme based on differential forms}

\author{
Philippe G. LeFloch and Baver Okutmustur
\footnote{Laboratoire Jacques-Louis Lions \& Centre National de la Recherche Scientifique,
Universit\'e Pierre et Marie Curie (Paris 6), 4 Place Jussieu,  75252 Paris, France.
\newline
E-mail : {\sl pgLeFloch@gmail.com, Okutmustur@ann.jussieu.fr}
\newline
\textit{\ AMS Subject Class.} {Primary : 35L65.     Secondary : 76L05, 76N.}  \, 
\textit{Key words and phrases.} Hyperbolic conservation law,
differential manifold, flux field of forms, entropy solution, finite volume method.
To appear in: Far East Journal of Mathematical Sciences (2008). 
}}

\date{September 2008}
\maketitle
\begin{abstract}
We consider nonlinear hyperbolic conservation laws, 
posed on a differential $(n+1)$-manifold with boundary referred to as a {\sl spacetime,}
and in which the ``flux'' is defined as a {\sl flux field of $n$-forms} depending on a parameter
(the unknown variable).
We introduce a formulation of the initial and boundary value problem which
 is geometric in nature and is more natural than the vector field approach
recently developed for Riemannian manifolds.
Our main assumption on the manifold and the flux field is a {\sl global hyperbolicity} condition,
which provides a global time-orientation as is standard in Lorentzian geometry and general relativity.
Assuming that the manifold admits a foliation by
compact slices, we establish the existence of a semi-group of entropy solutions.
Moreover, given any two hypersurfaces with one lying in the future of the other,
we establish a ``contraction'' property which compares two entropy solutions,
in a (geometrically natural) distance equivalent to the $L^1$ distance.
To carry out the proofs, we rely on a new version of the {\sl finite volume method,}
which only requires the knowledge of the given $n$-volume form structure on the $(n+1)$-manifold
and involves the {\sl total flux} across faces of the elements of the triangulations, only,  
rather than the product of a numerical flux times the measure of that face.
\end{abstract}


\section{Introduction}

The development of the mathematical theory (existence, uniqueness, qualitative behavior, approximation)
of shock wave solutions
to scalar conservation laws {\sl defined on manifolds}
is motivated by similar questions arising in compressible fluid dynamics.
For instance, the shallow water equations of geophysical fluid dynamics
(for which the background manifold is the Earth or, more generally, Riemannian manifold) 
and the Einstein-Euler equations in general relativity
(for which the manifold metric is also part of the unknowns)
provide important examples where the partial differential equations
 of interest are naturally posed on a (curved) manifold.
Scalar conservation laws yield a drastically simplified, yet very challenging, mathematical
model for understanding nonlinear aspects of shock wave propagation on manifolds.

In the present paper, given a (smooth) differential $(n+1)$-manifold $M$
 which we refer to as a {\sl spacetime,}
we consider the following class of {\sl nonlinear conservation laws}
\be
\label{cons1}
d(\om(u)) = 0, \qquad u=u(x), \, x \in M.
\ee
Here, for each $\ubar \in \RR$, $\omega=\omega(\ubar)$ is a (smooth) {\sl field of $n$-forms} 
on $M$ which we refer to as the {\sl flux field} of the conservation law \eqref{cons1}.

Two special cases of \eqref{cons1} were recently studied in the literature.
When $M = \RR_+ \times N$ and the $n$-manifold $N$ is 
endowed with a {\sl Riemannian metric} $h$, the conservation law \eqref{cons1} is here equivalent to
$$
\del_t u + \dive_h (b(u)) = 0, \qquad u=u(t,y), \ t \geq 0, \, y \in N.
$$
Here, $\dive_h$ denotes the divergence operator associated with the metric $h$.
In this case, the flux field is a {\sl flux vector field} $b=b(\ubar)$ {\sl on the $n$-manifold $N$}
and does not depend on the time variable. 
More generally, we may suppose that $M$ is endowed with a {\sl Lorentzian metric} $g$
and, then, \eqref{cons1} takes the equivalent form
$$
\dive_g (a(u)) = 0, \qquad u=u(x), \, x \in M.
$$
Observe that the flux $a=a(\ubar)$ is now a vector field {\sl on the $(n+1)$-manifold $M$.}

Recall that, in the Riemannian or Lorentzian settings, the theory of weak solutions on manifolds
was initiated by Ben-Artzi and LeFloch \cite{BL} and further developed in the follow-up papers by LeFloch and his collaborators 
\cite{ABL,ALO,PLF,LNO,LO}.
Hyperbolic equations on manifolds were also studied by Panov in \cite{Panov} with a vector field standpoint. 
The actual implementation of a finite volume scheme on the sphere was recently realized by 
Ben-Artzi, Falcovitz, and LeFloch \cite{BFL1}. 

In the present paper, we propose a new approach in which the conservation law is written in the form \eqref{cons1}, that is, 
the flux $\omega=\omega(\ubar)$ is defined as a {\sl field of differential forms of degree $n$.}
Hence, no geometric structure is a~priori assumed on $M$, and the sole knowledge of the flux field structure
is required. The fact that the equation \eqref{cons1} is a ``conservation law'' for the unknown quantity $u$
can be understood by expressing Stokes theorem:
for sufficiently smooth solutions $u$, at least, the conservation law \eqref{cons1} is equivalent to
saying that the total flux
\be
\label{486}
\int_{\del \Ucal} \om(u) = 0, \qquad \Ucal \subset M,
\ee
vanishes for every open subset $\Ucal$ with smooth boundary.
By relying on the conservation law \eqref{cons1} rather than the equivalent expressions in the special cases 
of Riemannian or Lorentzian manifolds,
we are able to develop here a theory of entropy solutions to conservation laws posed on manifolds,
which is technically and conceptually simpler 
but also provides a significant generalization of earlier works.

Recall that weak solutions to conservation laws contain shock waves and, for the sake of uniqueness, 
the class of such solutions must be restricted by an entropy condition (Lax \cite{Lax}).
This theory of conservation laws on manifolds is a generalization of fundamental works
by Kruzkov \cite{Kruzkov}, Kuznetsov \cite{Kuznetsov}, and DiPerna \cite{DiPerna} who treated
equations posed on the (flat) Euclidian space $\R^n$.

Our main result in the present paper is a generalization of the formulation and convergence of the 
finite volume method for general conservation law \eqref{cons1}. In turn, we will
establish the existence of a semi-group of entropy solutions which is contracting in a suitable distance.

The first difficulty is formulating the initial and boundary problem for \eqref{cons1} in the sense of distributions. 
A weak formulation of the boundary condition is proposed which takes into account the nonlinearity and hyperbolicity 
of the equation under consideration.
We emphasize that our weak formulation applies to an arbitrary differential manifold.
However, to proceed with the development of the well-posedness theory we then need to impose
that the manifold satisfies a {\sl global hyperbolicity condition,} which provides a global time-orientation
and allow us to distinguish between ``future'' and ``past'' directions in the time-evolution.
This assumption is standard in Lorentzian geometry for applications to general relativity. For simplicity in this paper, we then
restrict attention to the case that the manifold is foliated by compact slices.

Second, we introduce a new version of the finite volume method (based on monotone numerical flux terms).
The proposed scheme provides a natural discretization of the conservation law
\eqref{cons1}, which solely uses the $n$-volume form structure
associated with the prescribed flux field $\omega$.

Third, we derive several stability estimates satisfied by the proposed scheme, especially
discrete versions of the entropy inequalities. As a corollary, we obtain a uniform control of the
entropy dissipation measure associated with the scheme, which, however, is not sufficient by itself to
the compactness of the sequence of approximate solutions.

The above stability estimates are sufficient to show that the sequence of approximate solutions generated
by the finite volume scheme converges to an {\sl entropy measure-valued solution} in the sense of DiPerna.
To conclude our proof, 
we rely on a generalization of DiPerna's uniqueness theorem \cite{DiPerna} and 
conclude with the
existence of entropy solutions to the corresponding initial value problem.

In the course of this analysis, we also establish a {\sl contraction property}
for any two entropy solutions $u, v$, that is,
given two hypersurfaces $H, H'$ such that $H'$ lies in the future of $H$,
\be
\label{489}
\int_{H'} \Omegabf(u_{H'}, v_{H'}) \leq \int_{H} \Omegabf(u_{H}, v_{H}).
\ee  
Here, for all reals $\ubar, \vbar$, the $n$-form field
$\Omegabf(\ubar, \vbar)$ is determined from the given flux field $\omega(\ubar)$
and can be seen as a generalization (to the spacetime setting) of the notion of Kruzkov entropy $|\ubar - \vbar|$.

Recall that DiPerna's measure-valued solutions were used to establish the convergence of schemes by Szepessy \cite{Szepessy,Szepessy2},
 Coquel and LeFloch \cite{CL1,CL2,CL3}, and Cockburn, Coquel, and LeFloch \cite{CCL00,CCL2}. 
For many related results and a review about the convergence techniques for hyperbolic problems, 
we refer to Tadmor \cite{Tadmor} and Tadmor, Rascle, and Bagneiri \cite{TRB}. 
Further hyperbolic models including also a coupling with elliptic equations and many 
applications were successfully 
investigated in the works by Kr\"oner \cite{Kroener}, and Eymard, 
Gallouet, and Herbin \cite{EGH}. For higher-order schemes, see the paper by Kr\"oner, Noelle, and Rokyta \cite{KNR}. 
Also, an alternative approach to the convergence of finite volume schemes
was later proposed by Westdickenberg and Noelle \cite{WN}. Finally, 
note that Kuznetsov's error estimate \cite{CCL00,CCL1} 
were recently extended to conservation laws on manifolds by LeFloch, Neves, and Okutmustur \cite{LNO}.

An outline of the paper is as follows. In Section~\ref{deux},
we introduce our definition of entropy solution which includes both initial-boundary data and entropy inequalities.
The finite volume method is presented in Section~\ref{finit}, and discrete stability properties
are then established in Section~\ref{discret}.
The main statements 	are given at the beginning of Section~\ref{wellp}, together with 
the final step of the convergence proof.


\section{Conservation laws posed on a spacetime}
\label{deux}

\subsection{A notion of weak solution}

In this section we assume that $M$ is an oriented, compact, differentiable $(n+1)$-manifold
with boundary. 
Given an $(n+1)$-form $\alpha$, its {\sl modulus} is defined as the $(n+1)$-form
$$
|\alpha| : = |\overline{\al}|\, dx^0 \wedge \cdots \wedge dx^n,
$$
where $\alpha = \overline{\al} \,dx^1 \wedge \cdots \wedge dx^n$ is written in an oriented frame
determined from local coordinates $x=(x^\alpha)=(x^0, \ldots, x^n)$.
If $H$ is a hypersurface, we denote by $i=i_H : H \to \MM$ the canonical injection map, and
by $i^*=i_H^*$ is the pull-back operator acting on differential forms defined on $M$.

On this manifold, we introduce a class of nonlinear hyperbolic equations, as follows.

\begin{definition}

1. A {\rm flux field} $\omega$ on the $(n+1)$-manifold $M$
is a parametrized family $\omega(\ubar) \in \Lambda^n(M)$ of smooth fields
of differential forms of degree $n$,
that depends smoothly upon the real parameter $\ubar$.

2. The {\rm conservation law} associated with a flux field $\om$ and with unknown $ u:  M \to \RR$ is
\be
\label{LR.1}
d\big(\om(u)\big)=0,
\ee
where $d$ denotes the exterior derivative operator and, therefore, $d\big(\om(u)\big)$ is a field
of differential forms of degree $(n+1)$ on $M$.

3. A flux field $\omega$ is said to {\rm grow at most linearly}
if for every $1$-form $\rho$ on $M$
\be
\label{LR.2}
\sup_{\ubar \in \RR} \int_M \left| \rho \wedge \del_u \om(\ubar) \right| < \infty.
\ee
\end{definition}

With the above notation, by introducing local coordinates $x=(x^\al)$ we can write for all $\ubar \in \RR$
$$
\aligned
\om(\ubar) & = \om^\al(\ubar) \,  (\widehat{dx})_\al,
\\
(\widehat{dx})_\al  & := dx^0 \wedge \ldots \wedge dx^{\al-1} \wedge dx^{\al+1} \wedge \ldots \wedge dx^n.
\endaligned
$$
Here, the coefficients $\omega^\alpha = \om^\al(\ub)$ are smooth functions defined in the 
chosen local chart.
Recall that the operator $d$ acts on differential forms with arbitrary
degree and that, given a $p$-form $\rho$ and a $p'$-form $\rho'$, one has $d(d\rho)=0$ and
$d(\rho \wedge \rho')= d\rho \wedge \rho' +(-1)^p \rho \wedge d\rho'$.

As it stands, the equation \eqref{LR.1} makes sense for unknown functions that are, for instance,
Lipschitz continuous. However, it is well-known that solutions to nonlinear hyperbolic equations
need not be continuous and, consequently, we need to recast \eqref{LR.1} in a weak form.

Given a {\sl smooth} solution $u$ of \eqref{LR.1} we can apply Stokes theorem on any open subset $\Ucal$
that is compactly included in $M$ and has smooth boundary $\del \Ucal$. We obtain
\be
\label{key64}
0 = \int_\Ucal d(\om(u)) = \int_{\del \Ucal} i^*(\om(u)).
\ee
Similarly, given any smooth function $\psi: M \to \RR$ we can write
$$
d(\psi \, \om(u)) = d\psi \wedge \om(u) + \psi \, d(\om(u)),
$$
where the differential $d\psi$ is a $1$-form field. Provided $u$ satisfies \eqref{LR.1}, we find
$$
\int_M d(\psi \, \om(u)) = \int_M d\psi \wedge \om(u)
$$
and, by Stokes theorem,
\be
\label{LR.0}
\int_M d\psi\wedge \om(u)=\int_{\del \MM}i^*(\psi\om(u)).
\ee
Note that a suitable orientation of the boundary $\del M$ is required for this formula
to hold.
This identity is satisfied by every smooth solution to \eqref{LR.1} and this motivates us to reformulate
\eqref{LR.1} in the following weak form.

\begin{definition}[Weak solutions on a spacetime]
Given a flux field with at most linear growth $\omega$,
a function $u \in L^1(M)$ is called a {\rm weak solution} to the conservation law \eqref{LR.1}
posed on the spacetime $M$
if
$$
\int_M d\psi\wedge \om(u) = 0
$$
for every function $\psi : M \to \RR$ compactly supported in the interior $\mathring M$.
\end{definition}

The above definition makes sense since the function $u$ is integrable and $\om(\ub)$ has at most linear growth in $\ub$,
so that the $(n+1)$-form $d\psi\wedge \om(u)$ is integrable on the compact manifold $M$.  


\subsection{Entropy inequalities}

As is standard for nonlinear hyperbolic problems,
weak solution must be further constrained by imposing initial, boundary, as well as entropy conditions.

\begin{definition}
\label{key63}
A (smooth) field of $n$-forms $\Om=\Om(\ubar)$ 
is called a {\rm (convex) entropy flux field} for the conservation law \eqref{LR.1} if there exists
a (convex) function $U: \RR \to \RR$ such that
$$
\Om(\ubar) = \int_0^\ubar \del_u U (\vb) \, \del_u \om(\vb) \, d\vb, \qquad \ubar \in \RR.
$$
It is said to be also {\rm admissible} if, moreover, $\sup | \del_u U | < \infty$.
\end{definition}

For instance, if one chooses the function $U(\ub, \vb):=|\ub - \vb|$, where $\vb$
is a real parameter, the entropy flux field reads
\be
\label{KRZ}
\Omegabf(\ub, \vb) := \sgn(\ub - \vb) \, ( \om(\ub) - \om(\vb)),
\ee
which is a generalization to a spacetime of the so-called Kruzkov's entropy-entropy flux pair.

Based on the notion of entropy flux above, we can derive entropy inequalities in the following way.
Given any smooth solution $u$ to \eqref{LR.1}, by multiplying \eqref{LR.1} by $\del_u U(u)$
we obtain the additional conservation law
$$
d( \Om(u) ) -(d\Om)(u) + \del_u U(u) (d\om)(u) = 0.
$$
However, for discontinuous solutions this identity can not be satisfied as an equality and, instead,
we should impose that the entropy inequalities
\be
\label{LR.1i}
d( \Om(u)) - (d\Om)(u) + \del_u U (u) (d\om)(u) \leq 0
\ee
hold in the sense of distributions for all admissible entropy pair $(U,\Om)$.
These inequalities can be justified, for instance, via the vanishing viscosity method, that is
by searching for weak solutions that are realizable as limits of smooth solutions to the parabolic
regularization of \eqref{LR.1}.

It remains to prescribe initial and boundary conditions. We emphasize that, without further assumption
on the flux field (to be imposed shortly below), points along the boundary
$\del M$ can not be distinguished and it is natural to prescribe the trace of the solution
along the {\sl whole} of the boundary $\del M$. This is possible provided the boundary data, $u_B: \del M \to \RR$,
is assumed by the solution in a suitably {\sl weak sense}. Following Dubois and LeFloch \cite{DL}, 
we use the notation 
\be
u \big|_{\del M} \in \Ecal_{U,\Om}( u_B)
\label{LR.2i}
\ee
for all convex entropy pair $(U,\Om)$, where for all reals $\ub$ 
$$
\Ecal_{U,\Om} (\ub):= \Big\{ \vb \in \RR \, \, \big| \, \, E(\ub, \vb):= \Om(\ub) + \del_uU(\ub)(\om(\vb) - \om(\ub))
\leq  \Om(\vb)  \Big \}.
$$
Recall that the boundary conditions for hyperbolic conservation laws 
(posed on the Euclidian space) 
were first studied by Bardos, Leroux, and Nedelec \cite{BLN} in the class of solutions with bounded variation 
and, then, in the class of measured-valued solutions by Szepessy \cite{Sz}.
Later, a different approach was introduced by Cockburn, Coquel, and LeFloch \cite{CCL1}
(see, in particular, the discussion p.~701 therein) 
in the course of their analysis of the finite volume methods, 
which was later expanded in Kondo and LeFloch \cite{KL}.
An alternative and also powerful approach to the boundary conditions for conservation laws 
was independently introduced by Otto \cite{Otto} and developed by followers. 
In the present paper, our proposed
formulation of the initial and boundary value problem is a generalization of the works \cite{CCL1} and \cite{KL}.

\begin{definition}[Entropy solutions on a spacetime with boundary]
\label{key92}
Let $\om=\om(\ub)$ be a flux field with at most linear growth and
$u_B \in L^1(\del M)$ be a prescribed boundary function.
A function $u \in L^1(M)$ is called an {\rm entropy solution} to the boundary value problem $\eqref{LR.1}$
and $\eqref{LR.2i}$
if there exists a bounded and measurable field of $n$-forms $\gamma \in L^1\Lambda^n(\del M)$
such that
$$
\aligned
&\int_M \Big(  d \psi \wedge \Om(u)
+ \psi \, (d  \Om) (u) - \psi \, \del_u U(u) (d \om) (u) \Big)
\\
& + \int_{\del \MM} \psi_{|\del M} \, \big(  i^*\Om(u_B) + \del_u U(u_B)  \big(\gamma - i^*\om(u_B) \big) \big)
\,  \geq 0
\endaligned
$$
for every admissible convex entropy pair $(U,\Om)$ and every smooth function $\psi: M \to \RR_+$.
\end{definition}

Observe that the above definition makes sense since each of the terms
$d \psi \wedge \Om(u)$,  $(d  \Om) (u)$,  $(d \om) (u)$ belong to $L^1(M)$.
The above definition can be generalized to encompass solutions within the much larger class of
measure-valued mappings.
Following DiPerna \cite{DiPerna}, we consider solutions that are no longer functions but
{\sl Young measures,} i.e,
weakly measurable maps $\nu: M \to \text{Prob}(\RR)$ taking values
within is the set of probability measures $\text{Prob}(\RR)$.
For simplicity, we assume that the support $\supp \nu$ is a compact subset of $\RR$.

\begin{definition}
\label{LR.4}
Given a flux field $\om=\om(\ub)$ with at most linear growth and given
a boundary function $u_B \in L^\infty(\del M)$, one says that
a compactly supported Young measure $\nu: M \to \text{Prob}(\RR)$ is
an {\rm entropy measure-valued solution}
to the boundary value problem $(\ref{LR.1}), (\ref{LR.2i})$
if there exists a bounded and measurable field of $n$-forms $\gamma \in L^\infty\Lambda^n(\del M)$
such that the inequalities
$$
\aligned
& \int_M  \Big\la \nu,  d \psi \wedge \Om(\cdot)  +  \psi \, \big(d  (\Om (\cdot)) - \del_u U(\cdot) (d \om) (\cdot)\big) \Big\ra
\\
& + \int_{\del M} \psi_{|\del M} \, \Big\la \nu, \Big( i^*\Om(u_B) + \del_u U(u_B)  \big(\gamma - i^*\om(u_B)\big)\Big) \Big\ra
\,  \geq 0
\endaligned
$$
hold for all convex entropy pair $(U,\Om)$ and all smooth functions $\psi \geq 0$.
\end{definition}


\subsection{Global hyperbolicity and geometric compatibility}

In general relativity, it is a standard assumption that the spacetime should be globally hyperbolic.
This notion must be adapted to the present setting, since we do not have a Lorentzian structure, but solely
the $n$-volume form structure associated with the flux field $\omega$.

We assume here that the manifold $M$ is foliated by hypersurfaces, say
\be
\label{foli}
M = \bigcup_{0 \leq t \leq T} H_t,
\ee
where each slice has the topology of a (smooth)
$n$-manifold $N$ with boundary. Topologically we have $M = [0,T] \times N$, and the boundary
of $M$ can be decomposed as
\be
\label{foli2}
\aligned
\del M & = H_0 \cup H_T \cup B,
\\
B = (0,T) \times N & :=  \bigcup_{0 < t < T} \del H_t.
\endaligned
\ee

The following definition imposes a non-degeneracy condition on the averaged flux on the hypersurfaces of the foliation.

\begin{definition}
\label{hyperb-def}
Consider a manifold $M$ with a foliation \eqref{foli}-\eqref{foli2} and let
$\om=\om(\ubar)$ be a flux field. Then,
the conservation law \eqref{LR.1} on the manifold $M$ is said to satisfy the
{\rm global hyperbolicity condition} if there exist constants $0 < \barc < \cbar$ such that
for every non-empty hypersurface $e \subset H_t$,
the integral $\int_e i^*  \del_u \om(0)$ is positive and
 the function $\var_e: \RR \to \RR$,
$$
\var_e(\ubar) := \dashint_e i^* \om(\ubar)
               = {\int_e i^* \om(\ubar) \over \int_e i^*  \del_u \om(0)},
\qquad
\ubar \in \RR
$$
satisfies
\be
\label{hyperb}
\barc \leq \del_u \var_e(\ubar) \leq \cbar,  \qquad \ub \in \RR.
\ee
\end{definition}

The function $\var_e$ represents the {\sl averaged flux} along the hypersurface $e$.
From now we assume that the conditions in Definition~\ref{hyperb-def} are satisfied.
It is natural to refer to $H_0$ as an initial hypersurface
and to prescribe an ``initial data'' $u_0 : H_0 \to \RR$ on this hypersurface and, on the other hand,
to impose a boundary data $u_B$ along the submanifold $B$. It will
be convenient here to use the standard terminology
of general relativity and to refer to $H_t$ as {\sl spacelike hypersurfaces.}

Under the global hyperbolicity condition \eqref{foli}--\eqref{hyperb}, the initial and boundary value problem now takes
the following form. The boundary condition \eqref{LR.2i} decomposes into
an initial data
\be
\label{cond1}
u_{H_0} = u_0
\ee
and a boundary condition
\be
\label{cond2}
u \big|_B \in \Ecal_{U,\Om}( u_B).
\ee
Correspondingly, the condition in Definition~\ref{key92} now reads
$$
\aligned
&\int_M \Big(  d \psi \wedge \Om(u)
+ \psi \, (d  \Om) (u) - \psi \, \del_u U(u) (d \om) (u) \Big)
\\
& + \int_B \psi_{|\del M} \, \big(  i^*\Om(u_B) + \del_u U(u_B)  \big(\gamma - i^*\om(u_B) \big) \big)
\\
& + \int_{H_T} i^* \Omega(u_{H_T}) - \int_{H_0} i^* \Omega(u_0) \geq 0.
\endaligned
$$

Finally, we introduce: 

\begin{definition}
A flux field $\om$ is called {\rm geometry-compatible} if it is closed for each value of the parameter,
\be
\label{LR.3}
( d \om ) (\ubar)=0, \qquad \ubar \in \RR.
\ee
\end{definition}

This compatibility condition is natural since it ensures that constants are trivial solutions to the conservation law, 
a property shared by many models of fluid dynamics 
(such as the shallow water equations on a curved manifold). 
When \eqref{LR.3} holds, then it follows from Definition~\ref{key63} that
every entropy flux field $\Om$ also satisfies the condition
$$
(d\Om) (\ubar) =0, \qquad \ubar \in \RR.
$$
In turn, the entropy inequalities \eqref{LR.1i} for a solution $u : M \to \RR$ simplify drastically and take the form
\be
\label{LR.1i-simple}
d( \Om(u)) \leq 0.
\ee


\section{Finite volume method on a spacetime}
\label{finit}

\subsection{Assumptions and formulation}

From now on we assume that the manifold $M= [0,T] \times N$ is foliated by slices with {\sl compact} topology $N$,
and the initial data $u_0$ is taken to be a bounded function.
We also assume that the global hyperbolicity condition holds and 
that the flux field $\om$ is geometry-compatible, which simplifies the presentation
but is not an essential assumption.

Let $\TT^h = \bigcup_{K \in\TT^h} K$ be a {\sl triangulation} of the manifold $M$, that is,
a collection of finitely many {\sl cells (or elements),} determined as the images of polyhedra of $\R^{n+1}$,
satisfying the following conditions:
\begin{itemize}
\item The boundary $\dK$ of an element $K$ is a piecewise smooth, $n$-manifold,
$\dK = \bigcup_{e\subset \dK} e$ and  contains exactly two 
spacelike faces,
denoted by  $\ekp$ and $\ekm$, and ``vertical'' elements
$$
e^0 \in \del^0 K:= \del K \setminus \big\{\ekp, \ekm\big\}.
$$
\item The intersection $K \cap K'$ of two distinct elements $K, K' \in \TT^h$
is either a common face of $K, K'$ or else a submanifold with dimension at most $(n-1)$.
\item The triangulation is compatible with the foliation \eqref{foli}-\eqref{foli2} in the sense that
there exists a sequence of times $t_0= 0 < t_1 < \ldots < t_N = T$ such that all spacelike faces
are submanifolds of $H_n := H_{t_n}$ for some $n=0, \ldots, N$,
and determine a triangulation of the slices.
We denote by $\Tcal^h_0$ the set of all elements $K$ which
admit one face belonging to the initial hypersurface
$H_0$.

\end{itemize}

We define the measure $|e|$ of a hypersurface $e \subset M$ by
\be
\label{e-def}
|e|:= \int_e i^* \del_u \om(0).
\ee
This quantity is positive if $e$ is sufficiently ``close'' to one of the hypersurfaces along
which we have assumed the hyperbolicity condition \eqref{hyperb}.
Provided $|e| >0$ which is the case if $e$ is included in one of the slices of the foliation,
 we associate to $e$ the function $\var_e: \RR \to \RR$, as defined earlier. 
Recall the following hyperbolicity condition which holds along the triangulation since the 
spacelike elements are included in the spacelike slices: 
\be
\label{HYP}
\barc \leq \del_u  \var_{e_K^\pm}(\ubar) \leq \cbar,  \qquad \quad K \in \TT^h.
\ee

We introduce the finite volume method by formally averaging the conservation law \eqref{LR.1}
over each element $K \in \TT^h$ of the triangulation, as follows. Applying Stokes theorem with a smooth solution $u$
to \eqref{LR.1}, we get
$$
0 = \int_K d(\om(u)) = \int_{\dK} i^*\om(u).
$$
Then, decomposing the boundary $\dK$ into its parts $\ekp, \ekm$, and $\dKz$ we find
\be
\label{LM.3}
\int_{\ekp} i^* \om(u) - \int_{\ekm} i^* \om(u) + \sumK \int_{e^0} i^*\om(u) = 0.
\ee
Given the averaged values $\ukm$ along $\ekm$ and $\ukezm$ along $\ekz \in \dKz,$ we need
an approximation $\ukp$ of the average value of the solution $u$ along $\ekp$.
To this end, the second term in \eqref{LM.3} can be approximated by
$$
\int_{\ekm} i^* \om(u) \approx \int_{\ekm} i^* \om(\ukm) = |\ekm|\var_{\ekm}(\ukm)
$$
and the last term by
$$
\int_{\ekz}i^*\om(u) \approx \qq (\ukm,\ukezm),
$$
where the \emph{total discrete flux} $\qq:\RR^{2}\to \RR$ (i.e., a scalar-valued function) must be prescribed.

Finally, the proposed version of the {\sl finite volume method} for the conservation law \eqref{LR.1} takes the form
\be
\label{LM.4}
\int_{\ekp} i^*\om(\ukp) = \int_{\ekm} i^*\om(\ukm) - \sumK  \qq (\ukm,\ukezm)
\ee
or, equivalently,
\be
\label{LM.5}
|\ekp| \vep (\ukp) =  |\ekm| \vem(\ukm) -  \sumK \qq (\ukm,\ukezm).
\ee

We assume that the functions $\qq$ satisfy the following natural assumptions for all $\ubar,\vbar \in \RR:$
\begin{itemize}

\item \emph{Consistency property :}
	\be
	\label{LM.3.1}
	\qq(\ubar,\ubar) = \int_{e^0} i^*\om(\ubar).
	\ee
	
\item \emph{Conservation property :}
	\be
	\label{LM.3.2}
	\qq(\vbar,\ubar) = -q_{K_{e^0},e^0}(\ubar,\vbar).
	\ee
	
\item \emph{Monotonicity property :}
	\be
	\label{LM.3.3}
	\del_{\ubar} \qq(\ubar,\vbar) \geq 0, \qquad \del_{\vbar}\qq(\ubar,\vbar)  \leq 0.
	\ee	
\end{itemize}

We note that, in our notation, there is some ambiguity with the orientation of the faces of the triangulation. 
To complete the definition of the scheme we need to specify the discretization of the initial data
and we define constant initial values $u_{K,0} = \ukm$ (for $K \in \Tcal^h_0$)
associated with the initial slice $H_0$ by setting
\be
\label{datai}
\int_{e_K^-} i^* \omega(u_K^-):= \int_{e_K^-} i^* \omega(u_0), \qquad  \, e_K^- \subset H_0.
\ee
Finally, we define a piecewise constant function $u^h : M \to \RR$ by setting for every element $K \in \TT^h$
\be
\label{uh}
u^h(x) = \ukm, \qquad x \in K.
\ee

It will be convenient to introduce $N_K := \# \del^0 K$,
 the total number of ``vertical'' neighbors of an element $K \in \Tcal^h$, 
which we suppose to be uniformly bounded. 
For definiteness, we fix a finite family of local charts covering the manifold $M$, and
we assume that the parameter $h$ coincides with the largest diameter of faces $e_K^\pm$ of
elements $K \in \Tcal^h$,
where the diameter is computed with the Euclidian metric expressed in the chosen local coordinates
(which are fixed once for all and, of course, overlap in certain regions of the manifold).

For the sake of stability we will need to restrict the time-evolution
and impose the following version of the Courant-Friedrich-Levy condition: for all $K \in \TT^h$,
\be
\label{CFL}
{N_K \over |\ekp|} \max_{e^0 \in \del^0 K} \sup_u \Big| \int_{e^0} \del_u \omega(u) \Big|  < \inf_u \del_u\vep,
\ee
in which the supremum and infimum in $u$ are taken over the range of the initial data.

We then assume the following conditions on the family of triangulations:
\be
\label{AS2}
\lim_{h \to 0} {\tau_{\max}^2 + h^2 \over \tau_{\min}} = 
\lim_{h \to 0}
{\tau_{\max}^2 \over h} = 0 
\ee 
where $\tau_{\max} := \max_i (t_{i+1} - t_i)$ and $\tau_{\min} := \min_i (t_{i+1} - t_i)$. 
For instance, these conditions are satisfied if $\tau_{\max}$, $\tau_{\min}$, and $h$ 
vanish at the same order.

Our main objective in the rest of this paper is to prove the convergence of the above scheme towards
an entropy solution in the sense defined in the previous section.


\subsection{A convex decomposition}

Our analysis of the finite volume method relies on a decomposition of \eqref{LM.5} into essentially one-dimensional schemes.
This technique goes back to Tadmor \cite{Tadmor}, Coquel and LeFloch \cite{CL1}, and 
Cockburn, Coquel, and LeFloch \cite{CCL1}. 

By applying Stokes theorem to \eqref{LR.3} with an arbitrary $\ubar \in \RR$, we have
$$
\aligned
0 & = \int_K d(\om(\ubar))=\int_{\dK} i^*\om(\ubar)
\\
  & = \int_{\ekp} i^* \om(\ubar)- \int_{\ekm} i^* \om(\ubar)
       + \sumK  \qq(\ubar,\ubar).
\endaligned
$$
Choosing $\ubar= u_K^-$, we deduce the identity
\be
\label{ident}
|\ekp| \vep(\ukm) = |\ekm| \vem(\ukm)
- \sumK \qq(u_K^-, u_K^-),
\ee
which can be combined with \eqref{LM.5} so that
$$
\aligned
& \vep( \ukp)
\\
& = \vep( \ukm) -  \sumK {1 \over |\ekp|} \Big( \qq(\ukm,\ukezm) - \qq(\ukm,\ukm) \Big)
\\
& =  \sumK \left(
 {1 \over N_K} \vep( \ukm) - {1 \over |\ekp|} \Big( \qq(\ukm,\ukezm) - \qq(\ukm,\ukm) \Big) \right).
\endaligned
$$

By introducing the intermediate values $\tu$ given by 
\be
\label{LM.7}
\vep(\tu) :=  \vep( \ukm) -  {N_K \over |\ekp|} \Big( \qq(\ukm,\ukezm) - \qq(\ukm,\ukm) \Big),
\ee
we arrive at the desired {\sl convex decomposition}
\be
\label{CD}
\vep(\ukp)=  {1 \over N_K} \sumK \vep(\tu).
\ee

Given any entropy pair $(U,\Om)$ and any hypersurface $e\subset M$ satisfying $|e| >0$ we introduce
the averaged entropy flux along $e$ defined by
$$
\veO(u):= \dashint_e i^* \Om(u).
$$
Obviously, we have $\veo(u) = \var_{e}(u)$.

\begin{lemma}
\label{convx}
For every convex entropy flux $\Omega$ one has
\be
\label{key94}
\vep^\Omega(\ukp) \leq  {1 \over N_K}  \sumK \vep^\Omega(\tu).
\ee
\end{lemma}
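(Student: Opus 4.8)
The plan is to recognize \eqref{key94} as a Jensen-type inequality built on top of the convex decomposition \eqref{CD} already established. First I would record that, by the hyperbolicity condition \eqref{HYP}, the averaged flux $\vep$ is a smooth, strictly increasing function of its real argument with $\barc \le \del_u \vep \le \cbar$; hence $\vep : \RR \to \RR$ is a global diffeomorphism and, in particular, the intermediate values $\tu$ are unambiguously determined by \eqref{LM.7}. Writing $s := \vep(u)$ for the new coordinate, the decomposition \eqref{CD} states precisely that $\vep(\ukp)$ is the equally weighted convex combination $\frac{1}{N_K}\sumK \vep(\tu)$ of the numbers $\vep(\tu)$.

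Next I would introduce the single-variable function $\Psi := \vep^\Omega \circ (\vep)^{-1}$, so that $\vep^\Omega(u) = \Psi(\vep(u))$ for all $u$, and reduce \eqref{key94} to the statement $\Psi\big(\frac{1}{N_K}\sumK \vep(\tu)\big) \le \frac{1}{N_K}\sumK \Psi(\vep(\tu))$, which is exactly Jensen's inequality provided $\Psi$ is convex. The crux of the argument is therefore to prove convexity of $\Psi$.

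To that end I would differentiate the defining identities. From Definition~\ref{key63} one has $\del_u \Om(\ubar) = \del_u U(\ubar)\, \del_u \om(\ubar)$, and since $\del_u U(\ubar)$ is a scalar it may be pulled out of the averaged integral, giving $\del_u \vep^\Omega(\ubar) = \del_u U(\ubar)\, \del_u \vep(\ubar)$. Hence, with $s = \vep(u)$, the chain rule yields $\Psi'(s) = \del_u U(u)$ and then $\Psi''(s) = \del_u^2 U(u)\,/\,\del_u \vep(u)$. Because $U$ is convex ($\del_u^2 U \ge 0$) and $\del_u \vep \ge \barc > 0$ by \eqref{HYP}, we conclude $\Psi'' \ge 0$, i.e.\ $\Psi$ is convex. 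Combining this with the convex combination identity \eqref{CD} and applying Jensen's inequality gives \eqref{key94}.

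The only real subtlety — the main obstacle — is the convexity of $\Psi$: it hinges on the algebraic fact that passing from $\om$ to the entropy flux $\Om$ multiplies the derivative of the averaged flux by the scalar factor $\del_u U$, so that in the reparametrized variable $s$ the second derivative inherits the sign of $\del_u^2 U$. Everything else (invertibility of $\vep$, well-definedness of $\tu$, and Jensen) follows directly from the hyperbolicity bounds \eqref{HYP} and the decomposition \eqref{CD}. Note that neither the CFL condition \eqref{CFL} nor the monotonicity of the fluxes \eqref{LM.3.3} is needed for this particular inequality; they will enter only in the later stability estimates.
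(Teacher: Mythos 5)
Your proof is correct, but it runs along a slightly different track than the paper's. The paper argues directly at the level of the $n$-forms: it expands $\Omega(\tu)-\Omega(\ukp)$ by an integration by parts into a linear term $\big(\om(\ukp)-\om(\tu)\big)\del_u U(\ukp)$ plus a quadratic remainder $D_{K,e^0}$, observes that after averaging over $\ekp$ and summing in $e^0$ the linear term cancels exactly because of the definition \eqref{LM.7} of $\tu$ (equivalently, the convex decomposition \eqref{CD}), and that the remainder is nonnegative by convexity of $U$ together with positivity of $\del_u\om$ on spacelike faces. You instead pass immediately to the averaged scalar quantities, prove that $\Psi=\vepO\circ\vep^{-1}$ is convex by the chain-rule computation $\Psi'=\del_u U\circ\vep^{-1}$, $\Psi''=\del_{uu}U/\del_u\vep\ge 0$, and invoke Jensen on the convex combination \eqref{CD}. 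The two arguments are two sides of the same coin — the paper even remarks that its proof ``will actually show'' that $\vepO\circ(\vep)^{-1}$ is convex — but yours makes that convexity the explicit engine, which is cleaner in one respect: by working with the averaged functions $\vep$, $\vepO$ from the start you only ever use the hyperbolicity bound \eqref{HYP} on spacelike faces, whereas the paper's form-level remainder argument implicitly needs positivity of $\del_u\om$ before averaging. Your route also generalizes painlessly to merely convex (non-$C^2$) $U$ if one replaces the computation of $\Psi''$ by the observation that $\Psi'=\del_u U\circ\vep^{-1}$ is nondecreasing as a composition of a nondecreasing function with an increasing one; as written, your $\Psi''$ step tacitly assumes $U\in C^2$, exactly as the paper's $\del_{uu}U$ does. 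Your closing remark that neither the CFL condition nor the flux monotonicity is needed here is accurate.
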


The proof below will actually show that the function $ \vep^{\Om}\circ ( \vep^{\om})^{-1}$ is convex.

\begin{proof} It suffices to show the inequality for the entropy flux themselves, and then to average
this inequality over $e$. So, we need to check:
\be
\label{318}
\Omega(\ukp) \leq  {1 \over N_K}  \sumK \Omega(\tu).
\ee
Namely, we have
$$
\aligned
&  {1 \over N_K}  \sumK \big( \Omega(\tu) - \Omega(\ukp) \big)
\\
& =  {1 \over N_K}
\sumK \big( \omega(\ukp) - \omega(\tu) \big) \del_u U(\ukp)
  +
 {1 \over N_K}  \sumK D_{K, e^0},
\endaligned
$$
with
$$
D_{K,e^0} : = \int_0^1 \del_{uu}U(\ukp) \Big(
\omega( \tu + a (\ukp - \tu)) - \omega(\tu) \Big) \, (\ukp - \tu) \, da.
$$
In the right-hand side of the above identity,
the former term vanishes identically in view of \eqref{LM.7}
while
the latter term is non-negative since $U(u)$ is convex in $u$ and $\del_u \omega$ is a positive $n$-form.
\end{proof}


\section{Discrete stability estimates}
\label{discret}

\subsection{Entropy inequalities}

Using the convex decomposition \eqref{CD}, we can derive a discrete version of the entropy inequalities.

\begin{lemma}[Entropy inequalities for the faces]
\label{lem-1}
For every convex entropy pair $(U,\Om)$ and all $K \in\Tcal^h$ and $\ekz\in \del^{0}K$, 
there exists a family of numerical entropy flux functions
$\QQ : \RR^{2}\to\RR$ satisfying the following conditions  for all $u,v \in \RR$:
\begin{itemize}
\item $\QQ$ is consistent with the entropy flux $\Om$:
\be
\label{CONST}
\QQ(u,u ) = \int_{e^0} i^* \Om(u).
\ee
\item Conservation property:
\be \label{CONSV}
\QQ(u,v ) = -{Q}_{K_{\ekz},\ekz}(v,u).
\ee
\item Discrete entropy inequality: with the notation introduced earlier, the finite volume scheme satisfies
       \be \label{DEI}
       \aligned
       \vepO&(\tu) - \vepO(\ukm)
       + {N_K \over |\ekp|}  \Big( \QQ(\ukm,\ukezm) -\QQ(\ukm,\ukm) \Big) \leq 0.
	\endaligned
	\ee
\end{itemize}
\end{lemma}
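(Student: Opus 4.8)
The plan is to recognize the intermediate update \eqref{LM.7} as an explicit two-point \emph{monotone} scheme and then run the classical Crandall--Majda argument, first for the Kruzkov family \eqref{KRZ} and then for a general convex entropy by superposition. Writing $\tu = H(\ukm,\ukezm)$ for the map defined implicitly by \eqref{LM.7} (well defined since $\vep$ is increasing by \eqref{HYP}), the two structural facts I need are: (i) $H$ is nondecreasing in each argument, and (ii) $H(c,c)=c$ for every constant $c$. Property (ii) is immediate from the steady-state identity \eqref{ident}, which is where geometry-compatibility enters. For (i), monotonicity in the second argument follows at once from $\del_{\vbar}\qq\le 0$ in \eqref{LM.3.3} together with $\del_u\vep>0$; monotonicity in the first argument is the delicate point and is precisely what the time-step restriction \eqref{CFL} enforces, via the consistency relation \eqref{LM.3.1} which identifies $\tfrac{d}{d\ub}\qq(\ub,\ub)=\int_{\ekz}i^*\del_u\om(\ub)$ with the quantity bounded in \eqref{CFL}.

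Granting monotonicity, I would first treat the Kruzkov entropies $\Omegabf(\cdot,c)$ from \eqref{KRZ}. Define the numerical Kruzkov flux
\[
\QQ^{c}(\ub,\vb) := \qq(\ub\vee c,\,\vb\vee c) - \qq(\ub\wedge c,\,\vb\wedge c).
\]
Consistency \eqref{CONST} follows because $\qq(\ub\vee c,\ub\vee c)-\qq(\ub\wedge c,\ub\wedge c) = \int_{\ekz} i^*\big(\om(\ub\vee c)-\om(\ub\wedge c)\big) = \int_{\ekz} i^*\Omegabf(\ub,c)$, and conservation \eqref{CONSV} is inherited from \eqref{LM.3.2}. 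For the entropy inequality, set $\tu^{\vee}:=H(\ukm\vee c,\ukezm\vee c)$ and $\tu^{\wedge}:=H(\ukm\wedge c,\ukezm\wedge c)$. Monotonicity together with $H(c,c)=c$ gives $\tu^{\vee}\ge \tu\vee c$ and $\tu^{\wedge}\le \tu\wedge c$, so that $\vep(\tu^{\vee})-\vep(\tu^{\wedge})\ge |\vep(\tu)-\vep(c)|$. On the other hand, subtracting the two update equations \eqref{LM.7} written for $\tu^{\vee}$ and $\tu^{\wedge}$ expresses $\vep(\tu^{\vee})-\vep(\tu^{\wedge})$ as $|\vep(\ukm)-\vep(c)| - (N_K/|\ekp|)\big(\QQ^{c}(\ukm,\ukezm)-\QQ^{c}(\ukm,\ukm)\big)$. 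Comparing the two and using $\vepO(\cdot)=\sgn(\cdot-c)\big(\vep(\cdot)-\vep(c)\big)$ for $\Om=\Omegabf(\cdot,c)$ yields \eqref{DEI} for the Kruzkov family.

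Finally I would pass to an arbitrary convex pair $(U,\Om)$ by superposition. Since the solution values stay in a compact range, I can write the convex $U$ as an affine function plus $\int \tfrac12|\cdot-c|\,\del_{uu}U(c)\,dc$, which translates into $\Om(\ub)=\del_uU\cdot\om(\ub) + \tfrac12\int \Omegabf(\ub,c)\,\del_{uu}U(c)\,dc$ up to constant $n$-forms, the affine part producing the scheme identity \eqref{ident} with equality. Defining $\QQ(\ub,\vb)$ by the same nonnegative combination of the $\QQ^{c}$ (plus the affine multiple of $\qq$) gives \eqref{CONST} and \eqref{CONSV} by linearity, and integrating the already-established Kruzkov inequalities against $\del_{uu}U(c)\,dc\ge 0$ produces \eqref{DEI} in full generality.

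I expect the monotonicity of $H$ in its first argument --- the genuine role of the CFL condition \eqref{CFL} --- to be the main obstacle; once it and the constant-preservation \eqref{ident} are in hand, the remaining steps are algebraic bookkeeping of the kind already illustrated in the proof of Lemma~\ref{convx}.
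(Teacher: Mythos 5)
Your proposal is correct and follows essentially the same route as the paper: the same monotone map $H$ (the paper works with $\vep(\tu)=\Hkez(\ukm,\ukezm)$ rather than $\tu$ itself, an immaterial composition with $\vep^{-1}$), the same Kruzkov numerical flux $\qq(u\vee c,v\vee c)-\qq(u\wedge c,v\wedge c)$, the same monotonicity/CFL argument, and the same reduction from Kruzkov entropies to general convex ones. The only quibble is that $H(c,c)=c$ is immediate from the structure $\qq(u,v)-\qq(u,u)$ in \eqref{LM.7} and does not require the steady-state identity \eqref{ident} or geometry-compatibility.
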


Combining Lemma ~\ref{convx} with the above lemma immediately implies:

\begin{lemma}[Entropy inequalities for the elements]
For each $K \in \Tcal^h$ one has the inequality
\be
\label{DEI4}
|e_K^+| \, \big( \vepO (\ukp) - \vepO(\ukm) \big)
+ \sumK \big( Q(\ukm,\ukezm) -Q(\ukm,\ukm) \big) \leq 0.
\ee
\end{lemma}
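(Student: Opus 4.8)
The plan is to combine the two preceding lemmas purely algebraically, with no new analytic input. First I would invoke the convexity inequality \eqref{key94} of Lemma~\ref{convx}, namely $\vepO(\ukp) \leq \frac{1}{N_K} \sumK \vepO(\tu)$. Since the sum $\sumK$ runs over exactly $N_K = \#\del^0 K$ vertical faces while $\vepO(\ukm)$ is independent of the index $\ekz$, I would rewrite the constant as an average, $\vepO(\ukm) = \frac{1}{N_K}\sumK \vepO(\ukm)$, and subtract to obtain
$$
\vepO(\ukp) - \vepO(\ukm) \leq \frac{1}{N_K} \sumK \big( \vepO(\tu) - \vepO(\ukm) \big).
$$

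Next I would estimate each summand on the right using the discrete entropy inequality \eqref{DEI} of Lemma~\ref{lem-1}, which asserts that $\vepO(\tu) - \vepO(\ukm)$ is bounded above by $-\tfrac{N_K}{|\ekp|}\big(\QQ(\ukm,\ukezm) - \QQ(\ukm,\ukm)\big)$. Substituting this bound into the inequality above, the factors $N_K$ cancel and I am left with
$$
\vepO(\ukp) - \vepO(\ukm) \leq -\frac{1}{|\ekp|}\sumK \big(\QQ(\ukm,\ukezm) - \QQ(\ukm,\ukm)\big).
$$
Finally, multiplying through by the positive measure $|\ekp|=|e_K^+|$ and transposing the flux terms to the left-hand side yields exactly \eqref{DEI4}.

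Since both ingredients are already in hand, there is essentially no genuine obstacle; the statement follows immediately, as advertised. The only points that demand care are bookkeeping in nature: one must correctly identify the number of terms in $\sumK$ as $N_K$ so that writing $\vepO(\ukm)$ as an average is legitimate and the cancellation of $N_K$ is exact, and one must observe that $|\ekp|>0$ (guaranteed by the hyperbolicity condition \eqref{HYP}, since $\ekp$ is a spacelike face lying in a slice of the foliation) so that multiplication by $|\ekp|$ preserves the direction of the inequality. No convexity or monotonicity argument beyond those already packaged inside Lemmas~\ref{convx} and~\ref{lem-1} is required at this stage.
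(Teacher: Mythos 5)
Your argument is correct and is precisely the combination the paper intends: it states that \eqref{DEI4} follows immediately from Lemma~\ref{convx} together with Lemma~\ref{lem-1}, and your averaging of \eqref{DEI} over $\ekz\in\dKz$, insertion into \eqref{key94}, and multiplication by $|\ekp|>0$ is exactly that computation carried out explicitly. Nothing further is needed.
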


\begin{proof}[Proof of Lemma~\ref{lem-1}]

{\it Step 1.} For  $u,v\in \RR$ and $  e^0\in \dKz$ we  introduce the notation
$$
\Hkez(u,v) :=\vep( u) - {N_K \over |\ekp|}  \Big( \qq(u,v) - \qq(u,u) \Big).
$$
Observe that
$$
 \Hkez(u,u) = \vep( u).
$$
We claim that $\Hkez$ satisfies the following properties:
\be
\frac{\del}{\del u} \Hkez(u,v)\ge 0,
\qquad
\frac{\del}{\del v}\Hkez(u,v) \ge 0.
\ee
The proof of the second property is
immediate by the monotonicity property \eqref{LM.3.3}, whereas, for the first one, we use the
CFL condition \eqref{CFL} together with the monotonicity property \eqref{LM.3.3}. From the definition of $\Hkez(u,v)$,
we observe that
$$
\Hkez(u,u_{K_{\ekz}}) = \big( 1- \sum_{\ekz\in \dKz}  \alpha_{K,\ekz}\big) \vep (u) + \sum_{\ekz\in \dKz}  \alpha_{K,\ekz} \vep (u_{K_{\ekz}}),
$$
where
$$
\alpha_{K,\ekz}:= {1 \over |\ekp|} \frac{\qq(u,u_{K_{\ekz}})- \qq(u,u)}{\vep(u)- \vep(u_{K_{\ekz}})}.
$$
This gives a convex combination of $\vep(u)$ and $ \vep(u_{K_{\ekz}})$. Indeed, 
by the monotonicity property \eqref{LM.3.3}  we have $\sum_{\ekz \in \dKz} \alpha_{K,\ekz} \geq 0$ and 
the CFL condition \eqref{CFL}  gives us
$$
\sum_{\ekz \in \dKz} \alpha_{K,\ekz}
\leq
\sum_{\ekz \in \dKz} {1 \over |\ekp|} \Big| \frac {\qq(u,u_{K_{\ekz}})- \qq(u,u)}{\vep(u) - \vep(u_{K_{\ekz}})}\Big|  \leq 1.
$$

\

{\it Step 2.}
It is sufficient to establish the entropy inequalities for the family of Kruzkov's entropies $\Omegabf$.
In connection with this choice, we introduce the numerical version of Kruzkov's entropy flux
$$
\Qbf(u,v,c):= \qq(u\vee c,v \vee c)- \qq(u\wedge c,v\wedge c),
$$
where $a\vee b= \max (a,b)$ and $a\wedge b= \min (a,b)$. Observe that $\QQ(u,v)$ satisfies the first two properties of the lemma
with the entropy flux replaced by the Kruzkov's family of entropies $\Omega=\Omegabf$ defined in \eqref{KRZ}.

First, we observe

\be \label{LM.H1}
    \begin{aligned}
    &\Hkez(u \vee c,v \vee c) - \Hkez(u \wedge c,v\wedge c)
    \\[5pt]
    &=\vep(u\vee c) - {N_K \over |\ekp|} \big( \qq(u\vee c,v\vee c)
    - \qq(u\vee c,u\vee c)\big) \\[5pt]
    & \quad - \Big( \vep(u\wedge c) -  {N_K \over |\ekp|} \big(\qq(u\wedge c,v\wedge c)
    - \qq (u\wedge c,u\wedge c)\big) \Big) \\[5pt]
      &= \var^{\Omega}_{\ekp}( u ,c) - {N_K \over |\ekp|} \Big( \Qbf(u,v,c) -    \Qbf(u,u,c)\Big),
   \end{aligned}
\ee
where we used
$$
\vep(u\vee c) - \vep(u\wedge c)= \dashint_{\ekp}i^*\Omegabf(u,c) =
\var^{\Omega}_{\ekp}( u ,c).
$$

Second, we check that for $u= \ukm$, $v= \ukezm$ and for any $c\in \RR$
\be
\label{LM.H2}
\Hkez(\ukm \vee c,\ukezm \vee c) - \Hkez(\ukm \wedge c,\ukezm\wedge c)
 \geq \var_{\ekp}^{\Omega}( \tu,c).
 \ee
To prove \eqref{LM.H2} we observe that 
$$
  \begin{array}{l}
     H_{K,e^0}(u,v) \vee H_{K,e^0}(\lambda,\lambda) \leq
     H_{K,e^0}(u\vee\lambda,v\vee\lambda), \\[5pt]
     H_{K,e^0}(u,v) \wedge H_{K,e^0}(\lambda,\lambda) \geq
     H_{K,e^0}(u\wedge\lambda,v\wedge\lambda),
   \end{array}
$$
where  $\Hkez$ is monotone in both variables. Since $\vep$ is monotone, we have
$$
\aligned 
& \Hkez(\ukm \vee  c,\ukezm \vee c) - \Hkez(\ukm \wedge c,\ukezm \wedge c)
\\
    &  \geq  \Big| \Hkez(\ukm,\ukezm)  -  \Hkez(c,c) \Big|
    =  \Big|  \vep(\tu)   - \vep(c) \Big|
    \\
    &  =
    \sgn\big(\vep(\tu)- \vep(c)\big) \big(\vep(\tu)- \vep(c)\big)
 \\
  & = \sgn\big( \tu - c \big)\big(\vep(\tu)- \vep(c)\big)
 =
    \varbO( \tu,c).
    \endaligned 
$$
Combining this identity with \eqref{LM.H1} (with  $u=\ukm$, $v=\ukezm $), we obtain the following
inequality for the Kruzkov's entropies
$$
\aligned
\var_{\ekp}^{\Omega}&(\tu,c) - \var_{\ekp}^{\Omega}(\ukm,c)
     + {N_K \over |\ekp|} \Big( \Qbf(u,v,c) -    \Qbf(u,u,c)\Big) \leq 0.
\endaligned
$$
As already noticed, this inequality implies a similar inequality for all convex entropy flux fields 
and this completes the proof.
\end{proof}


If $V$ is a convex function, then a \emph{modulus of convexity} for $V$ is any positive real
$\beta < \inf V''$, where the infimum is taken over the range of data under consideration.
We have seen in the proof of Lemma \ref{convx} that $\var_e^{\Om}\circ (\var_e^{\om})^{-1}$ is convex for
every spacelike hypersurface $e$ and every convex function $U$ (involved in the definition of $\Omega$).

\begin{lemma}[Entropy balance inequality between two hypersurfaces]
\label{POC.i}
For $K \in\Tcal^h$, let $\beta_{e_K^{+}}$ be a modulus of convexity for
 the function $\vepO \circ \big(\vepo\big)^{-1}$ and set $\beta = \min_{K \in \TT^h} \beta_{\ekp}$. Then, for $i \leq j$ one has
\label{PC-3}
\be
\label{FVM.20}
\aligned
& \sum_{K\in\Tcal_{t_j}^h} |\ekp| \vepO(\ukp)
  + \sum_{\substack{K\in\Tcal^h_{[t_i, t_j)}\\ \ekz\in\dKz}} \frac{\beta}{2 N_K} |\ekp| \big| \tu - \ukp \big|^2
\\
& \leq \sum_{K\in\Tcal_{t_i}^h} |\ekm| \vemO(\ukm),
\endaligned
\ee
where $\Tcal^h_{t_i}$ is the subset of all elements $K$ satisfying $e_K^- \in H_{t_i}$
while $\Tcal^h_{[t_i, t_j)} := \bigcup_{i \leq k < j} \Tcal^h_{t_k}$.
\end{lemma}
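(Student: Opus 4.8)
The plan is to upgrade the convex-decomposition inequality of Lemma~\ref{convx} into a \emph{quantitative}, strongly convex estimate carrying a quadratic remainder, to convert this into a one-time-step entropy balance for each element, and finally to sum over elements and telescope in time. Recall from the proof of Lemma~\ref{convx} that $\vepO\circ(\vepo)^{-1}$ is convex; if $\beta_{\ekp}$ is a modulus of convexity for this function, then the standard strengthening of Jensen's inequality, applied to the convex decomposition \eqref{CD} (in which $\vep(\ukp)=\frac{1}{N_K}\sumK\vep(\tu)$), yields
\[
\vepO(\ukp)+\frac{\beta_{\ekp}}{2N_K}\sumK\big|\vep(\tu)-\vep(\ukp)\big|^2\le\frac{1}{N_K}\sumK\vepO(\tu).
\]
This is the only genuinely new ingredient; everything else is summation and cancellation.

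First I would combine this with the face entropy inequality \eqref{DEI}. Multiplying \eqref{DEI} by $|\ekp|/N_K$ and summing over $\ekz\in\dKz$ bounds $\frac{|\ekp|}{N_K}\sumK\vepO(\tu)$ from above by $|\ekp|\vepO(\ukm)-\sumK\big(\QQ(\ukm,\ukezm)-\QQ(\ukm,\ukm)\big)$. To eliminate the diagonal flux terms I would use the entropy analogue of the constant-state identity \eqref{ident}: since $\om$ is geometry-compatible \eqref{LR.3}, the entropy flux field is closed as well, so Stokes' theorem applied to the constant $\ubar=\ukm$ together with the consistency \eqref{CONST} gives $\sumK\QQ(\ukm,\ukm)=|\ekm|\vemO(\ukm)-|\ekp|\vepO(\ukm)$. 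Substituting, the terms proportional to $\vepO(\ukm)$ cancel and I obtain the per-element balance
\[
|\ekp|\vepO(\ukp)+\frac{\beta_{\ekp}|\ekp|}{2N_K}\sumK\big|\vep(\tu)-\vep(\ukp)\big|^2\le|\ekm|\vemO(\ukm)-\sumK\QQ(\ukm,\ukezm).
\]

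Next I would sum this inequality over all elements $K$ in a single space-time slab. The conservation property \eqref{CONSV} makes the vertical flux contributions cancel in pairs across each interior face $\ekz$ shared by two neighboring elements, leaving an inequality of the form $\sum_K|\ekp|\vepO(\ukp)+(\text{quadratic})\le\sum_K|\ekm|\vemO(\ukm)$ for that slab. Because the triangulation is compatible with the foliation, the top faces of the elements of the $k$-th slab coincide with the bottom faces of the elements of the $(k{+}1)$-th slab, with matching measures, averaged-flux functions and values ($\ukp$ at level $k$ being the cell value at level $k{+}1$); hence the interface terms telescope, and summing over the slabs indexed by $\Tcal^h_{[t_i,t_j)}$ collapses the spacelike contributions to the top slice $H_{t_j}$ and the bottom slice $H_{t_i}$, while the quadratic terms accumulate. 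This produces \eqref{FVM.20}, but with $|\vep(\tu)-\vep(\ukp)|^2$ in place of $|\tu-\ukp|^2$. The final step is the conversion: by the hyperbolicity condition \eqref{HYP} one has $\del_u\vep\ge\barc$, so $|\vep(\tu)-\vep(\ukp)|\ge\barc\,|\tu-\ukp|$; replacing each $\beta_{\ekp}$ by the uniform $\beta=\min_{K}\beta_{\ekp}$ and absorbing the factor $\barc^2$ gives the stated inequality.

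The main obstacle I anticipate is twofold. The delicate analytic point is establishing the strongly convex Jensen estimate with the correct $1/N_K$ weights and the quadratic remainder expressed in the proper variable: one must keep track of the fact that the convex combination lives in the $\vep$-variable, which is precisely why the modulus of convexity is taken for $\vepO\circ(\vepo)^{-1}$ and why the conversion constant $\barc$ ultimately appears. The delicate bookkeeping point is the orientation of faces used in the cancellation via \eqref{CONSV} and in the time-telescoping; the paper itself flags that there is ``some ambiguity with the orientation of the faces,'' so one must fix orientations consistently so that each interior vertical face and each intermediate spacelike face is counted with opposite signs by its two adjacent elements.
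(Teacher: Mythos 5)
Your proposal is correct and follows essentially the same route as the paper's proof: the strengthened Jensen inequality attached to the modulus of convexity of $\vepO\circ(\vepo)^{-1}$ applied to the convex decomposition \eqref{CD}, combined with the face entropy inequalities \eqref{DEI}, the conservation property \eqref{CONSV} to cancel the vertical fluxes, the Stokes identity for the closed form $\Om(\ukm)$ to eliminate the diagonal terms $\QQ(\ukm,\ukm)$, and telescoping in time. Your explicit conversion of $|\vep(\tu)-\vep(\ukp)|^2$ into $|\tu-\ukp|^2$ via \eqref{HYP}, at the cost of absorbing a factor $\barc^2$, is a detail the paper's own argument glosses over.
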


We observe that the numerical entropy flux terms no longer appear in \eqref{FVM.20}.

\begin{proof}
Consider the discrete entropy inequality \eqref{DEI}. Multiplying by $|\ekp|/N_K$ and summing in $K\in\Tcal^h$, $\ekz\in\dKz$ gives
\be
\label{FVM.21}
\aligned
&  \sum_{\substack{K\in\Tcal^h\\ \ekz\in\dKz}}
{|\ekp| \over N_K} \vepO (\tu )  -  \sum_{K\in\Tcal^h}|\ekp|\vepO(\ukm)
\\
& \, +  \sum_{\substack{K\in\Tcal^h\\ \ekz\in\dKz}} \big(\QQ(\ukm,\ukezm ) -\QQ(\ukm,\ukm) \big)
\leq 0.
\endaligned
\ee
Next, observe that the conservation property \eqref{CONSV} gives
\be
\label{FVM.22}
  \sum_{\substack{K\in\Tcal^h\\ \ekz\in\dKz}}  \QQ(\ukm,\ukezm ) = 0.
\ee
So \eqref{FVM.21} becomes
\be
\label{FVM.21i}
\aligned
&  \sum_{\substack{K\in\Tcal^h\\ \ekz\in\dKz}} {|\ekp| \over N_K} \vepO (\tu )  -  \sum_{K\in\Tcal^h}|\ekp|\vepO(\ukm)
\\
& \, -  \sum_{\substack{K\in\Tcal^h\\ \ekz\in\dKz}} \QQ(\ukm,\ukm)
\leq 0.
\endaligned
\ee

Now, if $V$ is a convex function, and if $v =  \sum_{j}\alpha_{j} v_{j}$ is a convex combination of $v_{j}$,
then an elementary result on convex functions gives
$$
V(v) + \frac{\beta}{2} \sum_{j}\alpha_{j} |v_{j}- v|^{2} \le \sum_{j} \alpha_{j} V(v_{j}),
$$
where $\beta =\inf V''$, the infimum being taken over all $v_j$. We apply this inequality with  $ v= \vep(\ukp)$
and
$V = \vep ^{\Om}\circ (\vep^\omega)^{-1}$, which is convex.

Thus, in view of the convex combination \eqref{CD} and
by multiplying the above inequality by $|\ekp|$ and then summing in $K\in\Tcal^h$, we obtain
$$
\aligned
&  \sumkez |\ekp| \vepO(\ukp) +
  \sumkez {\beta \over 2}\, {|\ekp| \over N_K} \, |\tu - \ukp|^2
  \\
  &  \leq \sumkez {|\ekp|\over N_K} \vepO(\tu).
    \endaligned
$$
Combining the result with \eqref{FVM.21i}, we find
\be
\label{FVM.23}
\aligned
& \sum_{K\in\Tcal^h} |\ekp| \vepO(\ukp) - \sum_{K \in \Tcal^h} |\ekp| \vepO(\ukm)
 + \sumkez {\beta \over 2} {|\ekp| \over N_K} |\tu - \ukp|^2
\\
&  \leq  \sumkez  \QQ(\ukm,\ukm).
\endaligned
\ee
Using finally the identity
$$
\aligned
 0 &=\int_{K} d(\Om(\ukm))= \int_{\del K} i^* \Om(\ukm)
\\
& = |\ekp| \vepO(\ukm) -  |\ekm| \vemO(\ukm)+ \sum_{\ekz\in\dKz} \QQ(\ukm,\ukm),
\endaligned
$$
we obtain the desired inequality, after further summation over all of 
the elements $K$ within two arbitrary hypersurfaces. 
\end{proof}

We apply Lemma~\ref{POC.i} with a specific choice of entropy function $U$ and obtain the following uniform estimate.

\begin{lemma}[Global entropy dissipation estimate]
The following global estimate of the entropy dissipation holds:
\be
\label{EDE}
\sum_{\substack{K\in\Tcal^h\\ \ekz\in\dKz}} \frac{|\ekp|}{N_K}\big| \tu - \ukp \big| ^2
\lesssim C \,  \int_{H_0} i^* \Omega(u_0)
\ee
for some uniform constant $C>0$, which only depends upon the flux field and the sup-norm of the initial data,
and where $\Omega$ is the $n$-form entropy flux field associated with the quadratic entropy function $U(u) = u^2/2$.
\end{lemma}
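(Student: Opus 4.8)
The plan is to apply the entropy balance inequality of Lemma~\ref{POC.i} with the specific choice $U(u) = u^2/2$, taking $i = 0$ (the initial hypersurface) and $j = N$ (the final hypersurface $H_T$), and then to discard the nonnegative boundary term on the left-hand side while bounding the initial and terminal entropy contributions uniformly. Concretely, with this choice of $U$ the associated entropy flux field $\Omega$ is well-defined via Definition~\ref{key63}, and \eqref{FVM.20} reads
$$
\sum_{K\in\Tcal_{t_N}^h} |\ekp| \vepO(\ukp)
  + \sum_{\substack{K\in\Tcal^h_{[0, t_N)}\\ \ekz\in\dKz}} \frac{\beta}{2 N_K} |\ekp| \big| \tu - \ukp \big|^2
\leq \sum_{K\in\Tcal_{0}^h} |\ekm| \vemO(\ukm).
$$
The middle sum is exactly (up to the factor $\beta/2$) the entropy dissipation we wish to estimate, summed over all time levels.

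First I would argue that the terminal term $\sum_{K\in\Tcal_{t_N}^h} |\ekp| \vepO(\ukp)$ is nonnegative, or at worst bounded below by a controlled quantity, so that it may be dropped or absorbed. Since $U(u) = u^2/2$ is convex and nonnegative, and since $\vepO(u) = \dashint_e i^*\Omega(u)$ is an average of the entropy flux, one expects $\vepO(\ukp) \geq 0$ after a suitable normalization; more carefully, one uses the global hyperbolicity condition \eqref{hyperb}, which guarantees $\barc \leq \del_u \var_e \leq \cbar$ and hence controls the averaged entropy flux in terms of the averaged flux. This lets me discard the terminal sum (it has the correct sign) and retain only the dissipation sum on the left.

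Next I would bound the right-hand side $\sum_{K\in\Tcal_{0}^h} |\ekm| \vemO(\ukm)$ by $C \int_{H_0} i^*\Omega(u_0)$. By the initial-data discretization \eqref{datai} and the definition $|\ekm| = \int_{\ekm} i^*\del_u\om(0)$, each term $|\ekm|\vemO(\ukm) = \int_{\ekm} i^*\Omega(\ukm)$ is the entropy flux of the piecewise-constant initial approximation over a face of $H_0$. Summing over $K \in \Tcal_0^h$ reconstructs $\int_{H_0} i^*\Omega(u^h_0)$, which I would compare to $\int_{H_0} i^*\Omega(u_0)$ using the $L^\infty$ bound on the initial data and the smoothness of $\Omega$; the discrepancy is controlled because $\Omega$ is Lipschitz on the (compact) range of the data. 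Dividing through by $\beta/2$ — a fixed positive constant depending only on the flux field and the data range, per the definition of the modulus of convexity — yields the stated estimate \eqref{EDE} with $C$ depending only on the flux field and $\|u_0\|_\infty$.

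The main obstacle I anticipate is controlling the sign and size of the terminal term $\sum_K |\ekp|\vepO(\ukp)$ rigorously: the averaged entropy flux $\vepO$ is not literally the entropy $U$ evaluated at $\ukp$, but a flux-weighted average, so its nonnegativity is not automatic and must be extracted from the hyperbolicity bounds \eqref{hyperb} relating $\del_u\var_e$ to $\del_u\om(0)$. A careful lower bound $\vepO(u) \geq -C'$ (uniform over the data range), combined with the finite total measure $\sum_K |\ekp|$ of the terminal slice, suffices to absorb this term into the constant $C$; making this quantitative while keeping $C$ independent of $h$ is the delicate point, and it is exactly where the global hyperbolicity condition does the essential work.
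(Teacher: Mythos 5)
Your proposal is correct and follows essentially the same route as the paper: apply Lemma~\ref{POC.i} with the quadratic entropy from the initial to the final slice, discard the terminal sum, and bound the initial sum by $C\int_{H_0} i^*\Omega(u_0)$ via the discretization \eqref{datai}. The sign issue you flag for the terminal term resolves exactly as you suggest, since $\del_u\var_e^\Omega(\ubar)=\ubar\,\del_u\var_e(\ubar)$ has the sign of $\ubar$ by \eqref{hyperb}, so $\vepO\geq 0$ and the term may simply be dropped.
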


\begin{proof} We apply the inequality \eqref{FVM.20} with the choice $U(u)= u^2$
$$
\aligned
0&\geq \sum_{K\in\Tcal^h} ( |\ekp| \vepO(\ukp) -|\ekm| \vemO(\ukm) )
 +  \sum_{\substack{K\in\Tcal^h\\ \ekz\in\dKz}} \
 \frac{\beta}{2} {|\ekp| \over N_K} \big| \tu - \ukp \big|^2.
\endaligned
$$
After summing up in the ``vertical'' direction and keeping only the contribution of the elements
$K \in \Tcal^h_0$ on the initial hypersurface $H_0$,
we find
$$
\aligned
  \sum_{\substack{K\in\Tcal^h\\ \ekz\in\dKz}}
  {|\ekp| \over N_K} \beta \big| \tu - \ukp \big|^2
  \leq
{2 \over \beta} \, \sum_{K\in\Tcal_0^h} |\ekm| \vemO(u_{K,0}).
\endaligned
$$
Finally, we observe that, for some uniform constant $C>0$,
$$
\sum_{K\in\Tcal_0^h} |\ekm| \vemO(u_{K,0}) \leq C \, \int_{H_0} i^* \Omega(u_0).
$$
These expressions are essentially $L^2$ norm of the initial data, and the above inequality 
can be checked by fixing a reference volume form on the initial hypersurface $H_0$ 
and using the discretization \eqref{datai} of the initial data $u_0$. 
\end{proof}


\subsection{Global form of the discrete entropy inequalities}

We now derive a global version of the (local) entropy inequality \eqref{DEI}, 
i.e. we obtain a discrete version of the entropy inequalities arising in the very 
definition of entropy solutions.

One additional notation is necessary to handle ``vertical face'' of the triangulation: we 
fix a reference field of non-degenerate $n$-forms $\omegab$ on $M$ which will be used to measure the ``area'' 
of the faces $e^0 \in \del K^0$. This is necessary in our convergence proof, only, not in the formulation 
of the finite volume method. So, for every $K \in \Tcal^h$ we define 
\be
\label{e-def2}
|e^0|_{\omegab} := \int_{e^0} i^*\omegab \quad \text{ for faces } e^0 \in \del^0 K
\ee
and the non-degeneracy condition means that $|e^0|_{\omegab} >0$.

Given a test-function $\psi$ defined on $M$ and a face $e^0 \in \del^0 K$ of some element,
 we introduce the following averages
$$
\psi_{\ekz} := {\int_{e^0} \psi \, i^* \omegab \over \int_{e^0} i^* \omegab}, 
\qquad
\psi_{\dKz} := {1 \over N_K} \sum_{\ekz\in\dKz}  \psi_{e^0}, 
$$
where, for the first time in our analysis, we use the reference $n$-volume form $\omegab$. 

\begin{lemma}[Global form of the discrete entropy inequalities]
\label{PC-5}
Let $\Om$ be a convex entropy flux field, and let $\psi$ be a non-negative test-function
supported away from the hypersurface $t=T$.
Then, the finite volume approximations satisfy the global entropy inequality
\be
\label{FVM.31}
\aligned
& - \sum_{K \in \TT^h} \int_K d(\psi \Om ) (\ukm)
    - \sum_{K \in\Tcal^h_0} \int_{\ekm} \psi \, i^* \Om(u_{K,0})
  \\
&
\leq A^h(\psi) + B^h(\psi) + C^h(\psi), 
\endaligned 
\ee
with 
$$
\aligned 
& A^h(\psi) := \sumkez {|\ekp| \over N_K} \big( \psi_{\dKz} -  \psi_{\ekz} \big) \, \big( \vepO(\tu) - \vepO(\ukp) \big), 
\\
& B^h(\psi) := \sumkez \int_{\ekz} \big( \psi_{\ekz} - \psi \big) \, i^* \Om(\ukm), 
\\
& C^h(\psi) := -\sum_{K \in\Tcal^h} \int_{\ekp} \big( \psi_{\dKz} - \psi \big) \, \big( i^* \Om(\ukp) - i^*\Om(\ukm) \big).
\endaligned
$$
\end{lemma}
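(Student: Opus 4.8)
The plan is to run the standard reorganization of the local entropy inequality \eqref{DEI} (in the spirit of Cockburn, Coquel, and LeFloch), viewing $A^h,B^h,C^h$ as the \emph{averaging defects} produced when the pointwise test function $\psi$ is replaced by the vertical–face averages $\psi_{\ekz}$ and the cell–boundary averages $\psi_{\dKz}$. Concretely, I would first turn the volume/initial terms on the left of \eqref{FVM.31} into a purely face-by-face algebraic expression via Stokes' theorem, and then sum \eqref{DEI} against the nonnegative weight $\psi_{\ekz}|\ekp|/N_K$ to absorb the principal part, the remainder being exactly $A^h+B^h+C^h$.

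For the first step, geometry-compatibility \eqref{LR.3} gives $d(\psi\,\Om(\ukm))=d\psi\wedge\Om(\ukm)$, so Stokes on each $K$ yields
\[
\int_K d(\psi\Om)(\ukm)=\int_{\ekp}\psi\,i^*\Om(\ukm)-\int_{\ekm}\psi\,i^*\Om(\ukm)+\sumK\int_{\ekz}\psi\,i^*\Om(\ukm).
\]
Summing over $K$, the spacelike integrals telescope: across an interior spacelike face the time-stepping identifies $\ukp$ with the lower value of the element sitting immediately above, the faces on $H_T$ drop out because $\psi$ is supported away from $t=T$, and the surviving initial faces cancel the explicit boundary term of \eqref{FVM.31} since $u_{K,0}=\ukm$ by \eqref{datai}. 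This reduces the left-hand side of \eqref{FVM.31} to $\sum_{K}\int_{\ekp}\psi\big(i^*\Om(\ukp)-i^*\Om(\ukm)\big)-\sumkez\int_{\ekz}\psi\,i^*\Om(\ukm)$. Introducing $\psi_{\dKz}$ on the top faces, $\psi_{\ekz}$ on the vertical faces, and using the consistency \eqref{CONST} as $\int_{\ekz}i^*\Om(\ukm)=\QQ(\ukm,\ukm)$, the first sum becomes $\sum_K\psi_{\dKz}|\ekp|\big(\vepO(\ukp)-\vepO(\ukm)\big)+C^h(\psi)$ while $-\sumkez\int_{\ekz}\psi\,i^*\Om(\ukm)=B^h(\psi)-\sumkez\psi_{\ekz}\QQ(\ukm,\ukm)$. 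Hence \eqref{FVM.31} is equivalent to the single reduced inequality
\[
\sum_K\psi_{\dKz}|\ekp|\big(\vepO(\ukp)-\vepO(\ukm)\big)-\sumkez\psi_{\ekz}\,\QQ(\ukm,\ukm)\le A^h(\psi).
\]

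To close it, I would multiply \eqref{DEI} by the nonnegative weight $\psi_{\ekz}|\ekp|/N_K$ and sum over all $(K,\ekz)$. The key cancellation is $\sumkez\psi_{\ekz}\,\QQ(\ukm,\ukezm)=0$: since $\psi_{\ekz}$ is built from the face-intrinsic reference form $\omegab$, it takes the same value from both sides of a vertical face, so the conservation property \eqref{CONSV} makes the paired contributions cancel (compactness of the slices guarantees there are no unmatched vertical boundary faces). Splitting $\vepO(\tu)-\vepO(\ukm)=\big(\vepO(\tu)-\vepO(\ukp)\big)+\big(\vepO(\ukp)-\vepO(\ukm)\big)$ and using $\sumK\psi_{\ekz}=N_K\psi_{\dKz}$, the summed inequality reads
\[
\sum_K\psi_{\dKz}|\ekp|\big(\vepO(\ukp)-\vepO(\ukm)\big)-\sumkez\psi_{\ekz}\QQ(\ukm,\ukm)\le-\sumkez\frac{|\ekp|}{N_K}\psi_{\ekz}\big(\vepO(\tu)-\vepO(\ukp)\big).
\]
Comparing the right-hand side with $A^h(\psi)$, the inequality follows once $\sumkez\frac{|\ekp|}{N_K}\psi_{\dKz}\big(\vepO(\tu)-\vepO(\ukp)\big)\ge0$, which is immediate from $\psi_{\dKz}\ge0$ and Lemma~\ref{convx} (i.e.\ \eqref{key94}), giving $\sumK\big(\vepO(\tu)-\vepO(\ukp)\big)\ge0$ for each $K$.

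The averaging identities producing $B^h$ and $C^h$ and the weight bookkeeping are routine, and the only genuinely analytic input is the convexity estimate of Lemma~\ref{convx}. I expect the main obstacle to be the telescoping of the spacelike faces: one must correctly invoke the propagation identity relating $\ukp$ to the lower value of the neighbor above, discard the $H_T$ faces through the support of $\psi$, and match the $H_0$ faces against the explicit initial term of \eqref{FVM.31}. This is precisely where the hypotheses that $\psi$ vanishes near $t=T$ and that the initial values satisfy \eqref{datai} are used, and also where one must be careful that the cancellation $\sumkez\psi_{\ekz}\QQ(\ukm,\ukezm)=0$ requires the \emph{face-intrinsic} weight $\psi_{\ekz}$ rather than the cell-average $\psi_{\dKz}$, which is the role played by the auxiliary form $\omegab$.
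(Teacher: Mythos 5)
Your proposal is correct and follows essentially the same route as the paper: both proofs sum the local inequality \eqref{DEI} against the weight $\psi_{\ekz}|\ekp|/N_K$, cancel the $\QQ(\ukm,\ukezm)$ terms by the conservation property, convert $\QQ(\ukm,\ukm)$ via consistency to produce $B^h$, invoke Lemma~\ref{convx} together with $\sumK(\psi_{\dKz}-\psi_{\ekz})=0$ to produce $A^h$, and use Stokes plus the telescoping of spacelike faces to generate the $d(\psi\Om)$, initial, and $C^h$ terms. The only difference is organizational (you reduce the target first and then close it, while the paper massages the weighted inequality forward), so no further comment is needed.
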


\proof
From the discrete entropy inequalities \eqref{DEI}, we obtain
\be
\label{FVM.36}
\begin{aligned}
& \sumkez {|e_K^+| \over N_K} \psi_{\ekz} \Big( \vepO (\tu ) - \vepO(\ukm)\Big)
\\
& \quad + \sumkez  \psi_{e^0}\ \big( \QQ(\ukm,\ukezm ) - \QQ(\ukm,\ukm) \big)
 \leq 0.
\end{aligned}
\ee
Thanks the conservation property \eqref{CONSV}, we have
$$
\sumkez \psi_{\ekz} \QQ(\ukm,\ukezm ) = 0
$$
and, from the consistency property \eqref{CONST}, 
$$
\aligned
&  \sumkez  \psi_{\ekz}\QQ(\ukm,\ukm ) =  \sumkez \psi_{\ekz} \int_{\ekz}i^*\Om(\ukm)
\\
& \quad =  \sumkez \int_{\ekz} \psi \, i^* \Om(\ukm)
    +  \sumkez \int_{\ekz} (\psi_{\ekz} - \psi)  i^* \Om(\ukm).
\endaligned
$$
Next, we observe 
$$
\aligned
& \sumkez {|\ekp| \over N_K} \psi_{e^0} \, \vepO(\tu)
\\
& = \sumkez {|\ekp| \over N_K} \psi_{\dKz} \vepO(\tu)
  +\sumkez {|\ekp| \over N_K} (\psi_{\ekz} - \psi_{\dKz} )  \vepO (\tu)
\\
&\geq \sum_{K \in\Tcal^h} |\ekp| \psi_{\dKz} \vepO(\ukp)
      +\sumkez  {|\ekp| \over N_K} (\psi_{\ekz} - \psi_{\dKz} ) \vepO ( \tu ), 
\endaligned
$$ 
where, we used the inequality \eqref{key94} and the convex combination \eqref{CD}.
In view of 
$$
\sumkez {|\ekp| \over N_K} \psi_{\ekz} \, \vepO(  \ukm)
=   \sum_{K\in\Tcal^h}|\ekp| \psi_{\dKz} \, \vepO(  \ukm), 
$$
the inequality \eqref{FVM.36} becomes
\be
\label{FVM.39}
\aligned
& \sum_{K \in\Tcal^h} |\ekp|\psi_{\dKz}  \Big(
 \vepO(\ukp) - \vepO(\ukm)\Big)
 -\sumkez \int_{\ekz} \psi \,  i^*\Om (\ukm)
\\
& \leq   -\sumkez {|\ekp| \over N_K} (\psi_{\ekz} - \psi_{\dKz}) \vepO ( \tu )
 + \sumkez \int_{\ekz} (\psi_{\ekz} - \psi)\, i^* \Om(\ukm).
\endaligned
\ee

Note that the first term in \eqref{FVM.39} can be written as
$$
\aligned
&\sum_{K \in\Tcal^h}|\ekp| \psi_{\dKz}  \Big(
\vepO(\ukp) -\vepO (\ukm)\Big)
 \\
 &= \sum_{K \in\Tcal^h} \int_{\ekp} \psi (i^* \Om(\ukp) - i^*\Om(\ukm))
  + \sum_{K \in\Tcal^h} \int_{\ekp} (\psi_{\dKz} - \psi) \,   (i^* \Om(\ukp) - i^*\Om(\ukm)).
\endaligned
$$
We can sum up (with respect to $K$) the identities 
\be
\nonumber
\aligned
  &\int_{K} d(\psi \Om )(\ukm) = \int_{\del K} \psi \, i^*\Om(\ukm)
\\
&   =\int_{\ekp} \psi \, i^*\Om(\ukm) - \int_{\ekm} \psi \,i^*\Om(\ukm)
 + \sum_{\ekz\in\dKz} \int_{\ekz} \psi \, i^* \Om(\ukm)
\endaligned
\ee
and combine them with the inequality \eqref{FVM.39}. Finally, we arrive at the desired conclusion 
by noting that
$$
\aligned
& \sum_{K \in\Tcal^h} \Big(  \int_{\ekp} \psi \, i^* \Om(\ukp) -   \int_{\ekm} \psi \, i^* \Om(\ukm)\Big)
  = - \sum_{K \in\Tcal^h_0} \int_{\ekm} \psi \, i^* \Om(u_{K,0}).
\endaligned
$$
\endproof


\section{Convergence and well-posedness results}
\label{wellp}

We are now in a position to establish: 

\begin{theorem}[Convergence of the finite volume method]
\label{convergencefvm}
Under the assumptions made in Section~\ref{finit} and provided the flux field is geometry-compatible,
the family of approximate solutions $u^h$ generated by the finite volume scheme
converges (as $h \to 0$) to an entropy solution
of the initial value problem \eqref{LR.1}, \eqref{cond1}.
\end{theorem}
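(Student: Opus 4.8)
The plan is to follow DiPerna's measure-valued strategy: use the discrete stability estimates of Section~\ref{discret} to pass to a Young-measure limit, show that this limit is an entropy measure-valued solution in the sense of Definition~\ref{LR.4}, and then invoke a uniqueness theorem to conclude that the limit is in fact a Dirac mass, hence an entropy solution. The whole difficulty is concentrated in the passage to the limit in the global discrete entropy inequality of Lemma~\ref{PC-5}.

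First I would record a uniform $L^\infty$ bound. The monotonicity of $\Hkez$ obtained in the proof of Lemma~\ref{lem-1}, together with the convex decomposition \eqref{CD}, exhibits each updated value $\ukp$ as a nondecreasing function of the data $\ukm$ and $\ukezm$, so the scheme satisfies a discrete maximum principle and $\|u^h\|_{L^\infty(M)}$ remains within the range of $u_0$, uniformly in $h$. The fundamental theorem on Young measures then provides, along a subsequence, a compactly supported Young measure $\nu:M\to\text{Prob}(\RR)$ with $g(u^h)\ws\la\nu,g\ra$ for every continuous $g$. Next I would pass to the limit in \eqref{FVM.31}. Geometry-compatibility gives $(d\Om)(\ubar)=0$, so $\int_K d(\psi\Om)(\ukm)=\int_K d\psi\wedge\Om(\ukm)$ and the left-hand side of \eqref{FVM.31} is $-\int_M d\psi\wedge\Om(u^h)$ minus the initial term; by the Young-measure convergence these tend to $-\int_M\la\nu,d\psi\wedge\Om(\cdot)\ra$ and $-\int_{H_0}\psi\,i^*\Om(u_0)$, respectively.

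The hard part will be showing that the three error terms $A^h(\psi),B^h(\psi),C^h(\psi)$ vanish as $h\to0$; this is exactly where the mesh conditions \eqref{AS2} are needed, and I expect $A^h$ to be the decisive one. Since $\psi$ is Lipschitz and the faces entering $\psi_{\dKz}$ and $\psi_{\ekz}$ sit in one cell of diameter $\lesssim\max(h,\tau_{\max})$, I would bound $|\psi_{\dKz}-\psi_{\ekz}|\lesssim\max(h,\tau_{\max})$, and use $|\vepO(\tu)-\vepO(\ukp)|\lesssim|\tu-\ukp|$ from the uniform Lipschitz control of $\vepO$. Cauchy--Schwarz then yields
\[
|A^h(\psi)|\lesssim\max(h,\tau_{\max})\Big(\sumkez\tfrac{|\ekp|}{N_K}\Big)^{1/2}\Big(\sumkez\tfrac{|\ekp|}{N_K}|\tu-\ukp|^2\Big)^{1/2},
\]
where the last factor is bounded by the global entropy dissipation estimate \eqref{EDE} and the first sum telescopes to $\sum_K|\ekp|\lesssim\tau_{\min}^{-1}$; hence $|A^h(\psi)|\lesssim\max(h,\tau_{\max})\,\tau_{\min}^{-1/2}\to0$ precisely under the first limit in \eqref{AS2}. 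I would treat $B^h$ and $C^h$ in the same spirit, using $|\psi_{\ekz}-\psi|,|\psi_{\dKz}-\psi|\lesssim\max(h,\tau_{\max})$ on the relevant faces together with \eqref{EDE} and the second ratio in \eqref{AS2}. Passing to the limit in \eqref{FVM.31} would then give
\[
\int_M\la\nu,d\psi\wedge\Om(\cdot)\ra+\int_{H_0}\psi\,i^*\Om(u_0)\geq0
\]
for every admissible convex entropy pair $(U,\Om)$ and every non-negative $\psi$ supported away from $t=T$, which is the defining inequality of Definition~\ref{LR.4} in the geometry-compatible initial-value setting, so $\nu$ is an entropy measure-valued solution with initial data $u_0$.

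Finally I would invoke the spacetime generalization of DiPerna's uniqueness theorem: an entropy measure-valued solution carrying the Dirac initial data $\delta_{u_0}$ must itself be a Dirac mass, $\nu_x=\delta_{u(x)}$ for almost every $x$, and $u$ is the unique entropy solution of \eqref{LR.1}, \eqref{cond1}. Uniqueness of the limit upgrades the convergence from a subsequence to the full family $(u^h)$, and the collapse of $\nu$ onto a Dirac mass turns the weak-$*$ convergence into strong $L^1$ convergence. The one genuinely delicate estimate, as noted, is the balance in $A^h$ between the $\max(h,\tau_{\max})$ smoothness factor, the $O(\tau_{\min}^{-1})$ total top-face area, and the dissipation bound; the scaling hypothesis \eqref{AS2} is calibrated exactly so that this balance tips to zero.
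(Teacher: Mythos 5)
Your overall strategy is exactly the paper's: pass to the limit in the global discrete entropy inequality \eqref{FVM.31} to show that the Young measure $\nu$ is an entropy measure-valued solution with initial trace $\delta_{u_0}$, then invoke the spacetime version of DiPerna's uniqueness theorem (Theorem~\ref{MTHM.2}) to collapse $\nu$ to a Dirac mass and upgrade weak-$*$ to strong convergence. Your treatment of $A^h(\psi)$ --- Cauchy--Schwarz against the global dissipation estimate \eqref{EDE}, the bound $\sum_K |\ekp| \lesssim \tau_{\min}^{-1}$, and the first ratio in \eqref{AS2} --- is precisely the paper's computation, and the paper likewise reduces $C^h(\psi)$ to the same bound via the convexity inequality \eqref{318}. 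Your explicit discrete maximum principle for the $L^\infty$ bound is a sensible addition that the paper leaves implicit.

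The one step that would fail as written is $B^h(\psi)$. It is not ``in the same spirit'' as $A^h$: the dissipation estimate \eqref{EDE} plays no role there, and a single smoothness factor $|\psi_{\ekz}-\psi|\lesssim \tau_{\max}+h$ applied to $\sumkez\int_{\ekz}|i^*\Om(\ukm)|$ only yields a bound of order $(\tau_{\max}+h)/h$, since the total $\omegab$-area of the vertical faces scales like $h^{-1}$; this does not vanish under \eqref{AS2} (e.g.\ when $\tau_{\max}\sim h$). To gain the second power of $(\tau_{\max}+h)$ that makes the second ratio in \eqref{AS2} applicable, one must exploit the fact that $\psi_{\ekz}-\psi$ has zero average against $i^*\omegab$ on each face $\ekz$: subtracting from $i^*\Om(\ukm)$ the multiple $C_{\ekz}\,i^*\omegab$ of the reference form, with $C_{\ekz}=\int_{\ekz}i^*\Om(\ukm)\big/\int_{\ekz}i^*\omegab$, costs nothing, and the remaining oscillation of the smooth form $\Om(\ukm)$ (for fixed $\ukm$) over the small face $\ekz$ contributes the extra factor $O(\tau_{\max}+h)$. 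With that correction one gets $|B^h(\psi)|\lesssim(\tau_{\max}+h)^2/h\to 0$, and the rest of your argument goes through as in the paper.
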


Our proof of convergence of the finite volume method 
can be viewed as a generalization to spacetimes of the technique introduced by
Cockburn, Coquel and LeFloch \cite{CCL00,CCL2} for the (flat) Euclidean setting and already 
extended to Riemannian manifolds by Amorim, Ben-Artzi, and LeFloch \cite{ABL}
and to Lorentzian manifolds by Amorim, LeFloch, and Okutmustur \cite{ALO}.  

We also deduce that:

\begin{corollary}[Well-posedness theory on a spacetime]
\label{main}
Let $\MM=[0,T] \times N$ be a $(n+1)$-dimensional spacetime foliated by $n$-dimen\-sional
hypersurfaces $H_t$ ($t \in [0,T]$) with compact topology $N$ (cf.~\eqref{LR.1}).
Let $\om$ be a geometry-compatible flux field on $\MM$ satisfying the global hyperbolicity condition \eqref{hyperb}. 
An initial data $u_0$ being prescribed on $H_0$,
the initial value problem \eqref{LR.1}, \eqref{cond1} admits
an entropy solution $u \in L^\infty(M)$ which, moreover, has well-defined $L^1$ traces on any
spacelike hypersurface of $M$.
These solutions determines a (Lipschitz continuous) contracting semi-group in the sense
that the inequality
\be
\label{MTHM.1}
\int_{H'} i_{H'}^* \Omegabf\big( u_{H'}, v_{H'} \big)
\leq \int_{H} i^*_H \Omegabf\big( u_H,v_H\big)
\ee
holds for any two hypersurfaces $H,H'$ such that $H'$ lies in the future of $H$, and 
the initial condition is assumed in the weak sense
\be
\label{MTHM.23}
\lim_{t \to 0 \atop t>0} \int_{H_t} \iht \Omegabf\big( u(t), v(t) \big) = \int_{H_0} \ihz \Omegabf(u_0,v_0).
\ee
\end{corollary}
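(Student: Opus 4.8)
The plan is to read off existence from the convergence of the scheme and then to make the contraction property \eqref{MTHM.1} the centerpiece, since uniqueness, the semi-group law, and the weak attainment of the data \eqref{MTHM.23} all follow from it. First I would invoke Theorem~\ref{convergencefvm} to obtain an entropy solution $u \in L^\infty(M)$ as the limit of the approximations $u^h$; the uniform sup-norm bound is inherited from the convex decomposition \eqref{CD}, which exhibits the scheme as a genuine convex combination and hence enforces a discrete maximum principle controlled by $\sup|u_0|$. To define the $L^1$ traces on an arbitrary spacelike hypersurface, I would pass the entropy balance inequality of Lemma~\ref{POC.i} to the limit: since the flux is geometry-compatible, every convex entropy flux obeys $d(\Om(u)) \leq 0$ in the sense of measures (cf.~\eqref{LR.1i-simple}), so $\Om(u)$ is a bounded field of $n$-forms whose exterior derivative is a bounded measure. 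This divergence-measure structure, together with the foliation and the hyperbolicity bound \eqref{hyperb}, yields well-defined normal traces $u_H \in L^1(H)$ on each slice.

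The heart of the matter is the Kato-type inequality
\[
d\big( \Omegabf(u, v) \big) \leq 0
\]
on $\mathring M$, for any two entropy solutions $u,v$, where $\Omegabf$ is the Kruzkov entropy flux field \eqref{KRZ}. I would establish it by Kruzkov's doubling of variables adapted to the present geometric framework: working on $M \times M$, one writes the entropy inequality of Definition~\ref{key92} for $u=u(x)$ with the frozen parameter set to $v(y)$, writes the symmetric inequality for $v=v(y)$ with parameter $u(x)$, and pairs both against a test form concentrating on the diagonal $\{x=y\}$, the concentration being measured against the reference volume form $\omegab$. Geometry-compatibility guarantees that no zeroth-order source terms survive and that the cross terms telescope, so that as the concentration parameter tends to zero one is left with exactly $d(\Omegabf(u,v)) \leq 0$. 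Integrating this by Stokes' theorem over the region of $M$ enclosed between $H$ and $H'$, the interior contribution is nonpositive and the boundary collapses to the two slice integrals, giving precisely \eqref{MTHM.1}.

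From the contraction, uniqueness is immediate: two entropy solutions sharing the same initial trace satisfy $\int_{H_t} i^* \Omegabf(u,v) \leq \int_{H_0} i^* \Omegabf(u_0,u_0) = 0$ for every $t$, which forces $u=v$. The solution operator $u_0 \mapsto u$ is therefore well defined, Lipschitz in the geometric distance measured by $\Omegabf$, and satisfies the semi-group law by uniqueness of the Cauchy problem restricted to subintervals of the foliation. For the weak attainment \eqref{MTHM.23}, I would identify the $t \to 0^+$ trace of $u$ with $u_0$: the consistency of the initial discretization \eqref{datai}, combined with the balance inequality \eqref{FVM.20} applied between $H_0$ and a nearby slice, controls the approach of the approximations to the initial data on $H_0$ in the $L^1$ sense, and this passes to the limit.

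The main obstacle I anticipate is making the doubling of variables rigorous on a manifold carrying no background metric. Unlike the flat case there is no translation structure with which to build the diagonal mollifier, so I would have to construct it in the fixed local charts using $\omegab$ and then verify that the resulting commutator and error terms — which encode the variation of the flux coefficients $\om^\al$ and of the charts across the diagonal — vanish in the concentration limit. The geometry-compatibility assumption $(d\om)(\ub)=0$ is exactly what annihilates the would-be source terms in this computation; showing that the remaining geometric errors are genuinely of lower order, and that the $L^1$ traces constructed above are strong enough to justify the boundary identifications in the Stokes integration, is where the technical weight of the argument lies.
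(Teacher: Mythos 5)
Your proposal is sound in outline, but it takes a visibly different route from the paper, which in fact offers almost no proof of this corollary: the paper simply \emph{deduces} existence from Theorem~\ref{convergencefvm} and obtains uniqueness (and, implicitly, the contraction) from the measure-valued uniqueness statement of Theorem~\ref{MTHM.2}, whose proof is omitted and deferred to the Riemannian argument of \cite{BL}. You instead make the Kruzkov doubling of variables between two \emph{function-valued} entropy solutions the centerpiece, derive the Kato inequality $d(\Omegabf(u,v))\leq 0$, and integrate by Stokes between the two hypersurfaces. This is essentially the content of the omitted \cite{BL}-type argument, so you are filling in what the paper outsources; note, however, that you cannot bypass the measure-valued framework entirely, since the existence input (Theorem~\ref{convergencefvm}) itself rests on Theorem~\ref{MTHM.2} to upgrade the Young-measure limit to a function. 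Two points deserve to be made explicit. First, your step ``$\int_{H_t} i^*\Omegabf(u,v)=0$ forces $u=v$'' requires the lower bound $\barc$ in the hyperbolicity condition \eqref{hyperb}: along a spacelike face $e$ one has $\int_e i^*\Omegabf(\ub,\vb)=|e|\,|\var_e(\ub)-\var_e(\vb)|\geq \barc\,|e|\,|\ub-\vb|$, which is exactly what makes the functional equivalent to the $L^1$ distance; without it the conclusion fails. Second, your trace construction needs the same non-degeneracy to pass from normal traces of the $n$-form fluxes $\Om(u)$ (which the divergence-measure structure provides) back to an $L^1$ trace of $u$ itself, via invertibility of $\ub\mapsto\var_e(\ub)$. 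With those two uses of \eqref{hyperb} spelled out, your plan is a correct and more self-contained version of what the paper leaves to the reference; the paper's route buys brevity and reuses the measure-valued machinery already needed for convergence, while yours isolates the contraction property as an independent statement about pairs of bounded entropy solutions.
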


We can also extend a result originally established by DiPerna \cite{DiPerna}
(for conservation laws posed on the Euclidian space)
within the broad class of entropy measure-valued solutions.

\begin{theorem}
\label{MTHM.2}
Let $\om$ be a geometry-compatible flux field on a spacetime $\MM$ satisfying the global hyperbolicity condition \eqref{hyperb}.
Then, any entropy measure-valued solution $\nu$ (see Definition \ref{LR.4}) 
to the initial value problem \eqref{LR.1}, \eqref{cond1} reduces to a Dirac mass at each point, more precisely
\be
\nu= \delta_u,
\ee
where $u \in L^\infty(\MM)$ is the unique entropy solution to the same problem.
\end{theorem}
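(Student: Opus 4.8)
The plan is to compare the given entropy measure-valued solution $\nu$ with a genuine entropy solution and to show that this comparison forces $\nu$ to concentrate. By Theorem~\ref{convergencefvm} we already know that the initial value problem \eqref{LR.1}, \eqref{cond1} admits an entropy solution $u \in L^\infty(\MM)$; regarded as the Dirac family $x \mapsto \delta_{u(x)}$, it is in particular an entropy measure-valued solution in the sense of Definition~\ref{LR.4}. The goal is therefore to establish a contraction estimate between $\nu$ and $u$, measured in the geometric Kruzkov functional $\int_{H_s}\la \nu, i^*\Omegabf(\cdot, u)\ra$, and to deduce from the strict monotonicity of the averaged flux that this functional vanishes, whence $\nu_x = \delta_{u(x)}$ for almost every $x$. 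Since $N$ is compact without boundary we have $B=\emptyset$, so only the initial and final slices $H_0, H_T$ enter the entropy inequalities.

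First I would set up Kruzkov's doubling of variables, in the form adapted to measure-valued solutions by DiPerna. Introduce a second copy $y$ of the spacetime together with a non-negative test-function $\psi = \psi(x,y)$ supported away from $t=T$. Applying the measure-valued entropy inequality of Definition~\ref{LR.4} to $\nu_x$ with the Kruzkov entropy flux field $\Omegabf(\cdot, c)$ from \eqref{KRZ}, the constant $c = u(y)$ being frozen, and invoking geometry-compatibility \eqref{LR.3} (so that the simplified form \eqref{LR.1i-simple} applies and the $(d\Omega)$ and $\del_u U\,(d\om)$ terms drop), yields an inequality controlling $\int\!\!\int \la \nu_x,\, d_x\psi \wedge \Omegabf(\cdot, u(y))\ra$ by an initial contribution. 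Symmetrically, I would apply the entropy inequality for the exact solution $u(y)$ (in the sense of Definition~\ref{key92}) with frozen constant $c = \lambda$ and then integrate against $d\nu_x(\lambda)$, producing the companion inequality carrying $d_y\psi$.

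Next I would add the two inequalities and specialize the test-function to a diagonal mollifier $\psi(x,y) = \phi(x)\,\rho_\eps(x,y)$, where $\phi$ depends only on the time variable of the foliation and $\rho_\eps$ concentrates on the diagonal $x=y$ as $\eps \to 0$. The symmetry of $\Omegabf(\ub,\vb)$ in its two arguments makes the combined interior integrand an exact derivative in $(x,y)$ acting on $\rho_\eps$, so that in the limit the doubled integral collapses to the single-variable functional $\int_\MM \la \nu_x,\, d\phi \wedge i^*\Omegabf(\cdot, u(x))\ra$, the off-diagonal error terms being controlled by the at most linear growth \eqref{LR.2} of $\om$. Letting $\phi$ approximate the indicator of a slab $\{0 \le t \le s\}$ and using that $\nu$ and $u$ share the initial data $u_0$ (so the $t=0$ term vanishes), I obtain the contraction $\int_{H_s}\la \nu, i^*\Omegabf(\cdot, u)\ra \le 0$ for almost every $s$.

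To conclude, the hyperbolicity condition \eqref{hyperb} gives, for every spacelike face $e$, $\int_e i^*\Omegabf(\ub,\vb) = |e|\,|\var_e(\ub) - \var_e(\vb)| \ge \barc\,|e|\,|\ub - \vb|$, since $\var_e$ is strictly increasing; hence the vanishing of $\int_{H_s}\la \nu, i^*\Omegabf(\cdot, u)\ra$ forces $\la \nu_x, |\cdot - u(x)|\ra = 0$, i.e. $\nu_x = \delta_{u(x)}$. Uniqueness of $u$ then follows from the same computation applied to two exact solutions, which is precisely the contraction \eqref{MTHM.1}. The main obstacle I anticipate is the doubling step on a bare $(n+1)$-manifold carrying only the $n$-volume structure: unlike the Euclidean case there is no translation generating $\rho_\eps$, so the mollifier and the resulting commutator terms must be organized through the fixed local charts and the foliation, and one must check that the pull-backs $i^*$ and the two exterior differentials $d_x, d_y$ recombine correctly in the limit. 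Managing these geometric error terms, rather than the algebra of the Kruzkov inequalities, is where the real difficulty lies.
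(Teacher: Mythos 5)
Your proposal is correct in outline and follows exactly the route the paper itself points to: the paper omits the proof of Theorem~\ref{MTHM.2}, stating only that it is a variant of the Riemannian DiPerna-type argument of Ben-Artzi and LeFloch, which is precisely the Kruzkov doubling, diagonal mollification, and flux non-degeneracy (via \eqref{hyperb}) scheme you describe. The one step deserving more care than you give it is recovering the strong initial trace $\lim_{t\to 0}\int_{H_t}\la \nu, |\cdot-u_0|\ra =0$ from the weak boundary formulation of Definition~\ref{LR.4}, which is what actually kills the $t=0$ term in your slab argument.
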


We omit the details of the proof, since it is a variant of the Riemannian proof given in \cite{BL}.


It remains to provide a proof of Theorem~\ref{convergencefvm}.
Recall that a Young measure $\nu$ allows us to determine all weak-$*$ limits of composite functions $a(u^h)$
for all continuous functions $a$, as $h \to 0$, 
\be
\label{COP.1}
a(u^h) \mathrel{\mathop{\rightharpoonup}\limits^{*}} \langle \nu, a \rangle
=
 \int_\RR a(\lambda) \, d\nu(\lambda). 
\ee

\begin{lemma}[Entropy inequalities for the Young measure]
\label{COP-1}
Let $\nu$ be a Young measure associated with the finite volume approximations $u^h$.
Then, for every convex entropy flux field $\Om$ and every non-negative test-function $\psi$ 
supported away from the hypersurface $t=T$, one has
\be
\label{COP.2}
\int_{M} \langle  \nu, d\psi \wedge \Om(\cdot)\rangle
- \int_{H_{0}} i^*\Om(u_0) \leq 0.
\ee
\end{lemma}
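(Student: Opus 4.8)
The plan is to derive \eqref{COP.2} by letting $h\to0$ in the global discrete entropy inequality \eqref{FVM.31} of Lemma~\ref{PC-5}. The left-hand side of \eqref{FVM.31} will produce, in the limit, the two terms of \eqref{COP.2}, while the three remainders $A^h(\psi),B^h(\psi),C^h(\psi)$ will be shown to be negligible. Throughout I use that the scheme is monotone and the data bounded, so that $u^h$ is uniformly bounded in $L^\infty$; hence the associated Young measure $\nu$ has support in a fixed compact set and \eqref{COP.1} applies to every continuous integrand.

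I would first pass to the limit in the two principal terms. Because the flux is geometry-compatible \eqref{LR.3}, every entropy flux field satisfies $(d\Om)(\ubar)=0$, so on each element the piecewise-constant value $\ukm$ gives $d(\psi\,\Om)(\ukm)=d\psi\wedge\Om(\ukm)$; summing over $K$ and using $u^h\equiv\ukm$ on $K$ yields $\sum_{K}\int_K d(\psi\Om)(\ukm)=\int_M d\psi\wedge\Om(u^h)$. Since $\ubar\mapsto d\psi\wedge\Om(\ubar)$ is continuous and $\supp\nu$ is compact, \eqref{COP.1} gives $\int_M d\psi\wedge\Om(u^h)\to\int_M\langle\nu,d\psi\wedge\Om(\cdot)\rangle$. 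For the initial contribution, the discretization \eqref{datai} of $u_0$ together with the consistency of the averaged flux shows $\sum_{K\in\Tcal^h_0}\int_{\ekm}\psi\,i^*\Om(u_{K,0})\to\int_{H_0}\psi\,i^*\Om(u_0)$ as the initial mesh is refined. Up to the orientation conventions fixed on $\partial M$, these two limits reproduce the left-hand side of \eqref{COP.2}.

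The core of the argument is to prove $A^h,B^h,C^h\to0$, where the mesh conditions \eqref{AS2} and the entropy-dissipation bound \eqref{EDE} enter decisively. For $A^h$ I bound $\vepO(\tu)-\vepO(\ukp)$ by a Lipschitz constant times $|\tu-\ukp|$ and pair it, by Cauchy--Schwarz, against the $O(h)$ oscillation $\psi_{\dKz}-\psi_{\ekz}$ of the smooth test-function; the dissipation factor is controlled by \eqref{EDE}, while the remaining factor is of order $\big(\sum_{K,\ekz}\tfrac{|\ekp|}{N_K}|\psi_{\dKz}-\psi_{\ekz}|^2\big)^{1/2}=O(h/\sqrt{\tau_{\min}})$, which vanishes since $h^2/\tau_{\min}\to0$. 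For $B^h$ the key is a double cancellation: $\psi_{\ekz}-\psi$ has zero $\omegab$-average on each vertical face $e^0$, so one may subtract the $\omegab$-average of the smooth density of $i^*\Om(\ukm)$; the integrand then carries two factors of size $O(\tau_{\max}+h)$, and summing over the $O(1/h)$ total $\omegab$-area of vertical faces gives a bound of order $(\tau_{\max}+h)^2/h$, which tends to $0$ precisely because $\tau_{\max}^2/h\to0$.

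The main obstacle is $C^h(\psi)$, which lives on the spacelike faces $\ekp$ and contains the time increment $i^*\Om(\ukp)-i^*\Om(\ukm)$ rather than the dissipation $\tu-\ukp$ directly controlled by \eqref{EDE}. Here the crude $L^\infty$ bound on $|\ukp-\ukm|$ is insufficient (it yields only $O(\tau_{\max}/\tau_{\min})$, which need not vanish), so I would instead exploit that the update \eqref{LM.7}, \eqref{CD} makes the time increment proportional to the discrete spatial flux differences, whence $|\ukp-\ukm|\lesssim(\tau_{\max}/h)\max_{\ekz}|\ukezm-\ukm|$, and then control the spatial oscillations $\sum_{K,\ekz}|\ekp|\,|\ukezm-\ukm|^2$ by a weak-BV consequence of \eqref{EDE}. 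Combining this $L^2$ control with Cauchy--Schwarz and the sharp balance encoded in \eqref{AS2} is what forces $C^h\to0$; securing this estimate sharply, rather than the routine convergence of the principal terms, is where the real difficulty lies.
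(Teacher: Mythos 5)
Your handling of the two principal terms and of the remainders $A^h(\psi)$ and $B^h(\psi)$ coincides with the paper's proof: geometry-compatibility turns $\sum_K\int_K d(\psi\Om)(\ukm)$ into $\int_M d\psi\wedge\Om(u^h)$ so that \eqref{COP.1} applies, $A^h$ is dispatched by Cauchy--Schwarz against the dissipation estimate \eqref{EDE} with the factor $(\tau_{\max}+h)/\tau_{\min}^{1/2}$ from \eqref{AS2}, and your ``double cancellation'' for $B^h$ is exactly the paper's subtraction of the face average $C_{\ekz}\,i^*\omegab$, giving $(\tau_{\max}+h)^2/h$. You are also right that a crude $L^\infty$ bound on $i^*\Om(\ukp)-i^*\Om(\ukm)$ yields only $O(\tau_{\max}/\tau_{\min})$ for $C^h(\psi)$, which need not vanish.

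Where you diverge from the paper, there is a genuine gap. For $C^h(\psi)$ you propose to bound $|\ukp-\ukm|$ by $(\tau_{\max}/h)\max_{\ekz}|\ukezm-\ukm|$ and then to control the spatial oscillation $\sum|\ekp|\,|\ukezm-\ukm|^2$ by ``a weak-BV consequence of \eqref{EDE}''. No such estimate is established in the paper, and it does not follow from \eqref{EDE}: that estimate controls $|\tu-\ukp|^2$, and by \eqref{LM.7} the quantity $\vep(\tu)-\vep(\ukm)$ is proportional to the monotone flux difference $\qq(\ukm,\ukezm)-\qq(\ukm,\ukm)$, which is bounded \emph{above} by a constant times $|\ukezm-\ukm|$ but admits no lower bound of that form (for an upwind-type or otherwise degenerate monotone flux it can vanish while $\ukezm\neq\ukm$). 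So the weak-BV bound you invoke cannot be extracted from \eqref{EDE}, and your argument for $C^h(\psi)\to0$ does not close. The paper's route avoids spatial oscillations entirely: combining the convexity inequality \eqref{318} (equivalently \eqref{key94}) with the convex decomposition \eqref{CD} and the bi-Lipschitz bounds \eqref{HYP}, the \emph{time} increment on a spacelike face satisfies
\be
\nonumber
\big|\vepO(\ukp)-\vepO(\ukm)\big| \,\leq\, \frac{C}{N_K}\sumK \big|\tu-\ukm\big|,
\ee
so that $|C^h(\psi)|$ is bounded by the very same expression as $|A^h(\psi)|$ and is then killed by \eqref{EDE} and \eqref{AS2}. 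The idea you are missing is that the dissipation quantities $\tu-\ukp$ already control the jump from $\ukm$ to $\ukp$ directly, so no separate spatial BV-type estimate is needed.
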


Based on this lemma, we are now in position to complete the proof of Theorem \ref{convergencefvm}.
Thanks to \eqref{COP.2}, we have for all convex entropy pairs
$(U,\Om)$,
$$
d  \langle \nu, \Om(\cdot) \rangle  \leq 0
$$
in the sense of distributions on $M$.
On the initial hypersurface $H_0$ the (trace of the) Young measure $\nu$ coincides with
the Dirac mass $\delta_{u_0}$.
By Theorem~\ref{MTHM.2} there exists a unique function $u \in L^\infty(M)$ (the entropy solution to the
initial-value problem under consideration)
such that the measure $\nu$ coincides with the Dirac mass $\delta_u$.
Moreover, this property also implies that the approximations $u^h$ converge strongly to $u$,
and this concludes the proof of the convergence of the finite volume scheme.

\begin{proof}[Proof of Lemma~\ref{COP-1}]
The proof is a direct passage to the limit in the inequality \eqref{FVM.31},
by using the property \eqref{COP.1} of the Young measure.
First of all, we observe that,  the left-hand side of the inequality \eqref{FVM.31} converges  to
the left-hand side of \eqref{COP.2}.
Indeed, since $\omega$ is geometry-compatible, the first term of interest
$$
\sum_{K \in \TT^h} \int_K d(\psi \Om ) (\ukm)
= 
\sum_{K \in \TT^h} \int_K d\psi \wedge \Om (\ukm)
= 
\int_M d\psi \wedge \Om(u^h) 
$$
converges to $\int_{M} \langle  \nu, d\psi \wedge \Om(\cdot)\rangle$. On the other hand,  
the initial contribution
$$
\sum_{K \in\Tcal^h_0} \int_{\ekm} \psi \, i^* \Om(u_{K,0})
=
\int_{H_0} \psi \, i^*\Om(u_0^h) 
\to \int_{H_{0}} \psi \, i^*\Om(u_0),  
$$ 
in which $u_0^h$ is the initial discretization of the data $u_0$ and converges strongly to $u_0$
since the maximal diameter $h$ of the element  tends to zero.

It remains to check that the terms on the right-hand side of \eqref{FVM.31} vanish in the limit $h \to 0$.
We begin with the first term $A^h(\psi)$.
Taking the modulus of this expression, applying Cauchy-Schwarz inequality, and finally 
using the global entropy dissipation estimate \eqref{EDE}, we obtain 
$$
\aligned
|A^h(\psi)|  
&
\leq
\sumkez {|\ekp| \over N_K} |\psi_{\dKz} -\psi| |\tu - \ukm|
\\
&
\leq
\Big(\sumkez {|\ekp|\over N_K} |\psi_{\dKz} -\psi|^2\Big)^{1/2} \Big( \sumkez {|\ekp| \over N_K} |\tu - \ukm| ^2 \Big)^{1/2}
\\
& 
\leq \Big( \sumkez {|\ekp|\over N_K}  (C \, (\tau_{\max} + h))^2\Big)^{1/2} \Big( \int_{H_0}i^*\Om(u_0)\Big) ^{1/2}, 
\endaligned
$$
hence
$$
\aligned
|A^h(\psi) | 
& \leq C' \, (\tau_{\max} + h) \Big( \sumk |\ekp| \Big)^{1/2}
\leq C'' \,  {\tau_{\max} + h  \over (\tau_{\min})^{1/2}}. 
\endaligned
$$
Here, $\Om$ is associated with the quadratic entropy and 
have used the fact that $|\psi_{\dKz} - \psi| \leq C \, (\tau_{\max} + h)$. 
Our conditions \eqref{AS2} imply the upper bound for $A^h(\psi)$ tends to zero with $h$.

Next, we rely on the regularity of $\psi$ and $\Om$ and estimate the second term on the right-hand side of \eqref{FVM.31}. 
By setting  
$$
C_{\ekz} := {\int_{\ekz} i^* \Omega(u_K^-) \over  \int_{\ekz} i^* \omegab}, 
$$
we obtain 
$$
\aligned
|B^h(\psi)| 
& = \Big| \sumkez \int_{\ekz} (\psi_{\ekz} - \psi)\, \Big( i^* \Om(\ukm) - C_{\ekz} i^* \omegab \Big) \Big| 
\\
& \leq \sumkez  \sup_K |\psi_{\ekz} - \psi| \, \int_{\ekz} \Big| i^* \Om(\ukm) - C_{\ekz} i^* \omegab \Big|
\\
&
\leq C \, \sumkez  (\tau_{\max} + h)^2 \, |e^0|_{\omegab},  
\endaligned
$$
hence
$$
|B^h(\psi)| 
\leq C \, {(\tau_{\max} + h)^2  \over h}. 
$$
Again, our assumptions imply the upper bound for $B^h(\psi)$ tends to zero with $h$. 

Finally, consider the last term in the right-hand side of \eqref{FVM.31}
$$
\aligned
|C^h(\psi)| 
& \leq 
\sum_{K \in\Tcal^h} |\ekp| \sup_K |\psi_{\dKz} -\psi| \int_{\ekp}  |i^* \Om(\ukp) - i^*\Om(\ukm)|,  
\endaligned
$$
using the modulus defined in the beginning of Section~2. 
In view of the inequality \eqref{318}, we obtain
$$
\aligned
|C^h(\psi)| 
& \leq
C \,  \sumkez {|\ekp| \over N_K} |\psi_{\dKz} -\psi| \,  \big| \tu - \ukm \big|, 
\endaligned
$$
and it is now clear that $C^h(\psi)$ satisfies the same estimate as the one we derived for $A^h(\psi)$.  
\end{proof} 


\section*{Acknowledgments}

This paper was completed when the first author visited the Mittag-Leffler Institute
in the Fall 2008 during the Semester Program ``Geometry, Analysis, and General Relativity''
organized by L. Andersson, P. Chrusciel, H. Ringstr\"om, and R. Schoen.  
This research was partially supported by the A.N.R. (Agence Nationale de la Recherche)
through the grant 06-2-134423 entitled {\sl ``Mathematical Methods in General Relativity''} (MATH-GR),
and by the Centre National de la Recherche Scientifique (CNRS).


\end{document}